\newtheorem{thm}{Theorem}
\newtheorem{lem}[thm]{Lemma}
\newtheorem{prop}[thm]{Proposition}
\theoremstyle{definition}
\newtheorem{defin}[thm]{Definition}
\newtheorem{rem}[thm]{Remark}
\numberwithin{equation}{section}
\def\A{\mathcal A}
\def\H{\mathcal H}
\def\LM{\mathcal{LM}}
\def\g{\mathfrak{g}}
\def\D{\mathcal D}
\def\P{\mathcal P}
\def\Prim{\mathrm{Prim}\,} 
\def\f{\mathbf k}
\begin{document}

\title{Racks as multiplicative graphs}

\author[J. Mostovoy]{Jacob Mostovoy}
\address{Departamento de Matem\'aticas, CINVESTAV-IPN\\ Col. San Pedro Zacatenco, M\'exico, D.F., C.P.\ 07360\\ Mexico}
\email{jacob@math.cinvestav.mx}

\begin{abstract}
We interpret augmented racks as a certain kind of multiplicative graphs and show that this point of view is natural for defining rack homology.  We also define the analogue of the group algebra for these objects; in particular, we see how discrete racks give rise to Hopf algebras and Lie algebras in the Loday-Pirashvili category $\LM$.  Finally, we discuss the integration of Lie algebras in $\LM$ in the context of multiplicative graphs and augmented racks.
\end{abstract}

%\subjclass[2010]{Primary }

%\keywords{}

\maketitle

Racks are self-distributive algebraic structures which arise in different contexts, such as knot theory or the structure theory of Hopf algebras. They have been invented several times under different names: \emph{wracks} (which later morphed into \emph{racks}), \emph{distributive groupoids}, \emph{automorphic sets}. A union of conjugacy classes in a group can be considered as a rack; there are many other interesting examples. A closely related algebraic structure is that of an \emph{augmented rack} or a \emph{crossed $G$-set}: this is a rack together with a morphism into a group (which is thought of as a rack with the operation of conjugation). We refer to \cite{FR} for an overview of the subject; we shall use the facts from that paper without explicit reference.

In the present note we interpret augmented racks as multiplicative, rather than self-distributive structures; we call these structures \emph{group-like graphs}. This point of view provides a simple interpretation of rack spaces (and, hence, rack homology) and leads to the definition of an analogue of the group algebra for racks: it is a Hopf algebra in the Loday-Pirashvili category of linear maps. This algebra carries a filtration similar to the filtration by the powers of the augmentation ideal in a group algebra; the associated graded Hopf algebra is the 
universal enveloping algebra of a certain Lie algebra in the Loday-Pirashvili category, which can be described in terms of what we call the \emph{graded coinvariant module} of the corresponding augmented rack. Note that Hopf dialgebras and Leibniz algebras are particular cases of Hopf algebras and Lie algebras in the Loday-Pirashvili category, see \cite{LP}, so our constructions can be translated into the dialgebra language. 

We also consider smooth group-like graphs and indicate how set up a version of Lie theory for them. One can think of a Lie algebra in the Loday-Pirashvili category (in particular, a Leibniz algebra) as of an infinitesimal object corresponding to a group-like graph of a certain kind, namely, a \emph{linear Lie graph}. Every such finite-dimensional Lie algebra can be integrated to a linear Lie graph; this is consistent with the formal integration procedure of \cite{M}. Linear Lie graphs are in one-to-one correspondence with \emph{linear augmented Lie racks}; the exponential map of a Lie algebra to the corresponding Lie group is an example of such rack. On the level of non-augmented racks, this integration procedure has been discussed in \cite{K}, where the connection between Leibniz algebras and racks was made for the first time.

Much of what we do is valid not only for group-like graphs, but for \emph{multiplicative graphs}: if group-like graphs are thought of as being analogous to groups, the multiplicative graphs are analogous to semigroups (and are different from \emph{shelves} of \cite{C}). We give a concrete example of a multiplicative graph which is not group-like; it arises in knot theory when considering knots with double points. A by-product of our constructions is a functor that assigns a differential graded Lie algebra to a Lie algebra in the Loday-Pirashvili category (in particular, to a Leibniz algebra).
 
\section{Multiplicative graphs and augmented racks}

\subsection{Multiplicative graphs}
In this note, unless stated otherwise, by a ``graph'' we shall mean a directed graph, possibly with loops and multiple edges. % (in other words, a quiver). 
Such a graph $Q$ can be written as a pair of sets (vertices $V$ and arrows $A$) with a pair of maps from arrows to vertices (source $s$ and target $t$):
$$Q=(V\underset{t}{\overset{s}{\leftleftarrows}} A).$$
Recall that the \emph{Cartesian product}  $Q_1\, \square\, Q_2$  of two graphs $Q_1=(V_1 \leftleftarrows A_1)$ and $Q_2=(V_2 \leftleftarrows A_2)$ is a graph on the set of vertices $V_1\times V_2$, the set of arrows $A_1\times V_2 \ \sqcup\ V_1\times A_2$ and the source and target maps
$$s=s_1\times \mathrm{id} \sqcup \mathrm{id} \times s_2$$
and 
$$t=t_1\times \mathrm{id} \sqcup \mathrm{id} \times t_2.$$
 If $Q_1$ and $Q_2$ are thought of 1-dimensional cell complexes with directed 1-cells, the graph $Q_1\, \square\, Q_2$ is obtained from the 2-dimensional cell complex $Q_1\times Q_2$ by erasing the 2-cells. The Cartesian product of graphs is associative in the sense that there is a canonical isomorphism $$(Q_1\, \square\, Q_2)\, \square\, Q_3\ \simeq \ Q_1\, \square\, (Q_2\, \square\, Q_3).$$

\begin{defin} A \emph{Cartesian-multiplicative} or, simply, \emph{multiplicative} graph is a graph $Q$ together with a morphism $$\mu: Q\, \square\, Q\to Q$$ which is associative in the sense that $$\mu \circ (\mu\, \square\, \mathrm{id}) = \mu \circ (\mathrm{id}\, \square\, \mu).$$ 
The set of vertices of a multiplicative graph $Q$ is a semigroup. Call $Q$ \emph{group-like} if this semigroup is, actually, a group.  
\end{defin}

As an algebraic structure, a group-like graph is equivalent to what is known as an \emph{augmented rack}. Recall that an augmented rack $X$ over a group $G$ (also called a \emph{crossed $G$-set}) is a $G$-set $X$ together with a morphism (\emph{augmentation map}) of $G$-sets $\pi: X\to G$, where $G$ acts on itself by conjugation. Probably, the most basic example of an augmented rack is a union of an arbitrary set of conjugacy classes in a group $G$. 
 
\medskip 
 
A group-like graph $Q=(G \leftleftarrows A)$  gives rise to an augmented rack in the following fashion.
The ``multiplication'' $\mu$ gives a two-sided action of the group $G$ of vertices of $Q$ on the set $A$ of arrows. 
Let $X\subset A$ be the set of arrows whose source is $1$. Then there is an action of $G$ on $X$  defined by 
$$x^g=g^{-1}\cdot x\cdot g$$
and the target map $t: X\to G$ is a morphism of $G$-actions where $G$ acts on itself by conjugation. In other words, $X$ is an augmented rack over $G$.

Conversely, given an augmented rack $\pi: X\to G$ one constructs a group-like graph $$(G \leftleftarrows G\times X)$$ by setting
$$s(g,x) = g, \quad t(g,x) = g \pi(x),$$
and
\begin{equation}\label{graphtorack}
\mu((g_1,x_1), g_2) = (g_1g_2, x_1^{g_2}), \qquad \mu(g_1, (g_2, x_2)) = (g_1g_2, x_2).
\end{equation}

Group-like graphs form a category, whose morphisms are the morphisms of graphs that respect the multiplication; in particular, they are group homomorphisms on vertices. Augmented racks also form a category whose morphisms are the commutative squares of the form
$$\begin{CD}
X @>f_X>> X'\\
@VV\pi V @VV\pi'V\\
G @>  f_G>>G' ,
\end{CD}$$
where $f_G$ is a homomorphism and $f_X(x)^{f_G(g)} = f_X(x^g).$ 

\medskip

The preceding construction that allows to pass  from a group-like graph to a rack and vice versa is functorial:
\begin{thm}\label{main}
The category of group-like graphs is equivalent to the category of augmented racks.
\end{thm}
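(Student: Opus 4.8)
The plan is to establish an equivalence of categories by exhibiting the two constructions already given in the excerpt as functors and showing they are mutually quasi-inverse. Let me sketch both directions and the naturality issues.

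Let me denote by $F$ the construction sending a group-like graph $Q = (G \leftleftarrows A)$ to the augmented rack $\pi\colon X \to G$, where $X \subset A$ is the set of arrows with source $1$, the action is $x^g = g^{-1}\cdot x \cdot g$, and $\pi = t|_X$. Let $E$ denote the reverse construction sending an augmented rack $\pi\colon X \to G$ to the graph $(G \leftleftarrows G\times X)$ with structure maps given by~\eqref{graphtorack}.

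First I would check that these assignments are well-defined on objects and extend to functors. For $E$, one must verify that $\mu$ as defined in~\eqref{graphtorack} is genuinely a morphism of graphs $Q\,\square\,Q \to Q$ (that is, it respects source and target, using the rack axiom $\pi(x^g)=g^{-1}\pi(x)g$ and the $G$-action axioms) and that it is associative; the associativity reduces to the compatibility of the left and right actions of $G$ on $G\times X$, which is a routine manipulation of the formulas. For $F$, one must confirm that the two-sided $G$-action coming from $\mu$ restricts to the stated action on $X$ and that $t|_X$ is $G$-equivariant for conjugation; this is where the defining property of a multiplicative graph gets unpacked. On morphisms, a morphism of group-like graphs restricts on source-$1$ arrows to a map $f_X$ satisfying the crossed-set condition $f_X(x^g)=f_X(x)^{f_G(g)}$, giving a morphism of augmented racks, and conversely a rack morphism extends to $G\times X$ compatibly; functoriality (preservation of identities and composition) is then immediate.

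Next I would exhibit the natural isomorphisms $EF \Rightarrow \mathrm{id}$ and $FE \Rightarrow \mathrm{id}$. The composite $FE$ is the easier one: applied to $\pi\colon X\to G$, the graph $E(X)$ has arrows $G\times X$ with source $s(g,x)=g$, so the source-$1$ arrows are exactly $\{1\}\times X \cong X$, and one checks the recovered action and augmentation agree on the nose, so $FE$ is naturally isomorphic to (in fact equal to, up to the identification $\{1\}\times X\cong X$) the identity. For the composite $EF$ applied to a group-like graph $Q=(G\leftleftarrows A)$, I would produce the comparison map $G \times X \to A$ sending $(g,x)\mapsto g\cdot x$ (the left multiplication of the vertex $g$ on the source-$1$ arrow $x$), and argue that every arrow of $A$ is uniquely of this form: an arrow $a$ with source $g$ is written as $g\cdot(g^{-1}\cdot a)$ where $g^{-1}\cdot a$ has source $1$. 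This bijection is the crux of the object-level isomorphism.

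The step I expect to be the main obstacle is verifying that this comparison map $G\times X \to A$ is a bijection and that it is compatible with the multiplicative structure, i.e.\ that it intertwines the $\mu$ defined by~\eqref{graphtorack} on $EF(Q)$ with the original $\mu$ on $Q$. The bijectivity relies on the two-sided $G$-action on $A$ being suitably free and transitive on the fibers of the source map, which itself must be deduced from the graph-multiplication axioms: one needs that left multiplication by a vertex $g$ restricts to a bijection from source-$1$ arrows onto source-$g$ arrows, with inverse given by left multiplication by $g^{-1}$. Establishing this, and then checking naturality of the comparison in $Q$ (compatibility with graph morphisms), is the technical heart of the argument; once the arrow sets are identified $G$-equivariantly, matching the two multiplications is a direct comparison of the formulas against $x^{g}=g^{-1}\cdot x\cdot g$.
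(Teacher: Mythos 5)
Your overall strategy coincides with the paper's: the paper offers no separate proof of Theorem~\ref{main}, treating it as a direct consequence of the two displayed constructions, and your plan (upgrade them to functors $E$ and $F$, observe $FE=\mathrm{id}$ up to the identification $\{1\}\times X\cong X$, and build the comparison map $G\times X\to A$, $(g,x)\mapsto g\cdot x$, for $EF$) is exactly the intended argument. The verifications you list for $E$, for morphisms, and for $FE$ are indeed routine and correct.

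However, the step you yourself single out as the technical heart contains a genuine gap: you claim that the statement ``left multiplication by $g$ is a bijection from source-$1$ arrows onto source-$g$ arrows, with inverse left multiplication by $g^{-1}$'' must be deduced from the graph-multiplication axioms. It cannot be. Associativity of $\mu$ only gives $g^{-1}\cdot(g\cdot a)=1\cdot a$, and nothing in the definition forces the unit vertex to act as the identity on arrows; the axioms merely make $a\mapsto 1\cdot a$ and $a\mapsto a\cdot 1$ commuting idempotent endomorphisms of the arrow set. Concretely, let $G$ be the trivial group, $A$ any set, the right action of $1$ the identity, and the left action of $1$ an idempotent map $\lambda\neq\mathrm{id}$; every axiom of a group-like graph holds, yet your comparison map $x\mapsto 1\cdot x=\lambda(x)$ is not surjective, and in fact $F$ itself is not well defined on this object, since $x^1=\lambda(x)\neq x$ means the induced operation is not a $G$-set structure. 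So unitality, $1\cdot a=a=a\cdot 1$, must be added as a hypothesis (this is clearly the paper's implicit intent when it speaks of ``a two-sided action of the group $G$'' on $A$, and it holds automatically for every graph in the image of $E$). Once unitality is granted, your argument closes: injectivity of $(g,x)\mapsto g\cdot x$ follows by comparing sources and cancelling with $g^{-1}$; surjectivity from $a=s(a)\cdot\bigl(s(a)^{-1}\cdot a\bigr)$; and the two multiplications match via $(g_1g_2)\cdot\bigl(g_2^{-1}\cdot x_1\cdot g_2\bigr)=g_1\cdot x_1\cdot g_2$, which uses associativity alone. Note finally that the same unitality is needed already for $F$ to be well defined (one needs $x^1=x$), so the gap occurs earlier in your outline than where you located it.
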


There are variations on the definition of a multiplicative graph. One may also consider usual graphs, that is, undirected graphs without loops and multiple edges. These lead to a very particular class of racks. Namely, in this context, for a group-like graph $Q$ with the group of vertices $G$, the set of edges $X$ one of whose ends is the unit of $G$ is a subset of $G$ which is (a) invariant under conjugation; (b) contains  together with each $g\in G$ its inverse. Conversely, for each union $X$ of conjugacy classes in $G$ satisfying (b) there exists a group-like graph whose set of vertices is $G$ and whose set of edges that connect to the unit is $X$.

\medskip

Another variation consists in considering \emph{$k$-graphs}\footnote{This terminology is somewhat arbitrary.} instead of graphs, that is, pairs of sets with $k+1$ maps between them, for any $k\geq -1$. One defines the Cartesian product of $k$-graphs in the same fashion. For instance, when $k=0$, with this definition each group-like 0-graph is of the form  $(G\leftarrow G\times X)$ where $X$ is a $G$-set and the map is the projection onto the first factor. A group-like 0-graph can be considered as a group-like 1-graph whose only arrows are loops.

%A group-like $k$-graph gives rise to group-like $k-1$-graphs and $k+1$-graphs obtained by deleting or repeating one of the maps. All the $m$-graphs obtained by applying these procedures several times form a simplicial set (namely, a simplex).  

%Let $G$ be a group and $X_*$ a simplicial set with a two-sided action of $G$ on it, such that $X_0=G$. Call such an object\footnote{again, somewhat arbitrarily.} a \emph{group-like simplicial set}. An even more general notion is that of a \emph{simplicial augmented rack}: this is a morphism of simplicial $G_*$-sets $X_*\to G_*$. where $G_*$ is a simplicial group. A group-like simplicial sets produces a simplicial augmented rack with $G_*$ a simplicial group whose non-degenerate simplices are 0-dimensional (in particular, $G_i=G$).  We shall only mention some of these objects briefly in this note. 

%Note that The collection of maps $X_k\to G$ that consists of all the compositions of $k$ successive face maps is a group-like $k$-graph. %Conversely, given a group-like $k$-graph $(G\ldots X)$ one constructs a group-like simplicial set by setting $X_0=G$ and $X_i=X$ for $i>0$, with the face and degeneracy maps defined by repeating and deleting the arrows of the $k$-graph $(G\ldots X)$

\subsection{Racks without augmentation and their associated groups} Historically, augmented racks are secondary objects as compared to racks.
A \emph{rack} (without an augmentation) is a set $X$ with a binary operation
$X\times X\to X,$ written as
$$(x,y)\to x^y$$
satisfying the identity
$$ (x^y)^z = {\left(x^z\right)}^{(y^z)}$$
and such that for every $y,z\in X$ there exists a unique $x\in X$ with
$$x^y = z.$$
This definition axiomatized the properties of conjugation in a group: if $X$  is a union of conjugacy classes in a group, the rack structure on $X$ is given by 
$$x^y = y^{-1}xy.$$
The rack structure on a group $G$ given by conjugation is called the \emph{conjugation rack of $G$}.

\medskip
In an augmented rack $\pi: X\to G$, the set $X$ naturally has the structure of a rack with the operation defined as 
$$x^y=x^{\pi(y)}.$$
Conversely, each rack $X$ defines the \emph{associated group} $G_X$ with the presentation
$$\langle \tau_{{x}_1},\ldots, \tau_{{x}_n}\, |\, \tau_{{x}_i} \tau_{{x}_j} = \tau_{x_j} \tau_{\left({x_i}^{x_j}\right)}\rangle,$$ 
where $X=\{x_1, x_2, \ldots \}$.
The group $G_X$ acts on $X$ and the tautological map $\tau: X\to G_X$ which sends  $x\in X$ to the corresponding generator $\tau_{x}$ of $G_X$ is $G_X$-equivariant; that is, to say, defines an augmented rack. The functor $X\to G_X$ is adjoint to the functor of taking the conjugation rack of a group.

\begin{prop}
The group-like graph corresponding to the augmented rack $X\to G_X$ is path-connected.
\end{prop}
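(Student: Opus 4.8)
The plan is to unwind the two constructions recalled in the excerpt and thereby reduce path-connectedness to the elementary fact that the elements $\tau_x$, $x\in X$, generate the group $G_X$. First I would write down the group-like graph attached to the augmented rack $\tau\colon X\to G_X$ via the recipe \eqref{graphtorack}: its vertex set is $G_X$, its arrow set is $G_X\times X$, and the arrow $(g,x)$ has source $s(g,x)=g$ and target $t(g,x)=g\tau_x$. Thus, in the underlying undirected graph, the arrow $(g,x)$ joins the vertex $g$ to the vertex $g\tau_x$; equivalently, traversing it forward right-multiplies the current vertex by $\tau_x$, while traversing it backward right-multiplies by $\tau_x^{-1}$.

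Next I would observe that, by the very form of the presentation of $G_X$, the set $\{\tau_x : x\in X\}$ is a generating set. Hence every element $g\in G_X$ can be written as a finite product $g=\tau_{x_{i_1}}^{\epsilon_1}\cdots\tau_{x_{i_k}}^{\epsilon_k}$ with each $\epsilon_j\in\{+1,-1\}$. Reading this word from left to right and, at the $j$-th step, traversing the arrow labelled $\tau_{x_{i_j}}$ forward when $\epsilon_j=+1$ and backward when $\epsilon_j=-1$, produces an edge-path from the identity vertex $1$ to $g$. Since $g$ is arbitrary, every vertex is joined to $1$ by a path, and so any two vertices are joined through $1$; this is exactly path-connectedness of the graph.

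The argument presents no genuine obstacle, so the one point to be careful about is the meaning of ``path.'' I read ``path-connected'' as connectedness of the topological realization, equivalently of the underlying undirected graph, so that arrows may be traversed in either direction; this is what makes the inverse generators $\tau_x^{-1}$ available and is forced by the fact that positive words in the $\tau_x$ do not in general exhaust $G_X$. If one instead demanded directed paths the statement would fail for a typical rack, so the undirected reading is the intended one. No further subtlety arises, since it is the entire vertex group $G_X$, and not merely a subsemigroup, that is generated by the images of $X$.
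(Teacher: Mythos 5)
Your proof is correct and follows essentially the same route as the paper's: both reduce the statement to the fact that the $\tau_x$ generate $G_X$ and then realize an arbitrary word $\tau_{x_{i_1}}^{\epsilon_1}\cdots\tau_{x_{i_k}}^{\epsilon_k}$ as an edge-path from $1$, using the arrow $(g,x)$ for a positive letter and the arrow $(g\tau_x^{-1},x)$ traversed backwards for a negative letter (the paper phrases this as an induction on word length rather than reading the word, but it is the same argument). Your remark on the undirected reading of ``path-connected'' matches the paper's implicit convention, since its own proof also traverses edges against their orientation.
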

\begin{proof}
By construction, $X$ can be identified with the set of edges of the graph that emanate from $1\in G_X$; in particular, each generator $\tau_{x} \in G_X$ is connected to 1 by the edge $x$. Also, the inverse of a generator $\tau_{x}^{-1}$ is connected to 1 by the edge $\tau_{x}^{-1}\cdot x$. Now, If $w\in G_X$ is connected to 1 by a sequence of edges $a_1, \ldots a_n$, with $s(a_1)=1$ and $t(a_n)=w$, then $w \tau_{x}$ is also connected to 1, by the sequence of edges $a_1, \ldots a_n, w\cdot x$, and $w\tau_{x}^{-1}$ -- by the sequence $a_1, \ldots a_n, w \tau_{x}^{-1}\cdot x$.
\end{proof}

\medskip

Note that the only property of the rack $X\to G_X$ that we used in the above proof is that the image of $X$ in $G_X$ generates the whole group $G_X$. A similar argument gives the following statement:
\begin{prop}
For an augmented rack $X\to G$, let $H$ be the subgroup of $G$ generated by the image of $X$. Then $H$ is normal in $G$; it is the group of vertices of the connected component of the unit in the group-like graph of  $X\to G$.
\end{prop}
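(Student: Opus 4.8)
The plan is to prove two things: first that $H$ is normal in $G$, and second that $H$ is exactly the group of vertices of the connected component of the unit. I will handle these in that order, since normality will follow from the defining property of an augmented rack, and the connectedness statement will reuse the path-construction argument from the preceding proposition.

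First I would establish normality. Recall that in an augmented rack $\pi\colon X\to G$ the augmentation map is $G$-equivariant, meaning $\pi(x^g) = g^{-1}\pi(x)g$ for all $x\in X$ and $g\in G$. Thus for any generator $\pi(x)$ of $H$ and any $g\in G$, the conjugate $g^{-1}\pi(x)g = \pi(x^g)$ again lies in the image of $X$, hence in $H$. Since conjugation by $g$ is a group automorphism, it sends the subgroup generated by $\pi(X)$ into the subgroup generated by $\pi(X)$, i.e.\ $g^{-1}Hg\subseteq H$; applying the same to $g^{-1}$ gives the reverse inclusion, so $H$ is normal. This is the clean algebraic core of the statement.

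For the connectedness claim, I would invoke exactly the argument from the previous proposition, observing (as the remark after it points out) that the only feature used there was that the image of $X$ generates the ambient group. Here the image of $X$ generates $H$ by definition, so the same inductive path-construction shows that every element of $H$ is connected to $1$ by a sequence of edges: each generator $\pi(x)$ is joined to $1$ by the edge $(1,x)$ with target $\pi(x)$, its inverse is joined to $1$ via the edge emanating from $\pi(x)^{-1}$, and if $w\in H$ is already connected to $1$ then so are $w\,\pi(x)$ and $w\,\pi(x)^{-1}$ by appending the appropriate edge. By induction on word length in the generators, all of $H$ lies in the connected component of $1$. Conversely, I must check that no vertex outside $H$ is reachable: an edge $(g,x)$ runs from $g$ to $g\pi(x)$, so traversing any edge (in either direction) changes the current vertex by right-multiplication by an element of $\pi(X)^{\pm 1}\subseteq H$; hence starting from $1$ one can only reach elements of $H$. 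This shows the component of $1$ has vertex set exactly $H$.

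The main obstacle, such as it is, is bookkeeping rather than conceptual: one must be careful that traversing an edge \emph{against} its orientation still only multiplies by an element of $H$, so that the two inclusions (component $\subseteq H$ and $H\subseteq$ component) match up. Because an edge from $g$ to $g\pi(x)$ can be crossed backwards from $g\pi(x)$ to $g$, the relevant multiplier is $\pi(x)^{-1}$, which still lies in $H$; thus the set of reachable vertices is closed under right multiplication by $\pi(X)^{\pm1}$ and contains $1$, giving precisely $H$. With normality and both inclusions in hand, the proof is complete.
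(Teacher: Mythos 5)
Your proof is correct and follows essentially the same route the paper intends: the paper simply remarks that the path-construction argument of the preceding proposition applies verbatim, since the only property used there is that the image of $X$ generates the group in question. You have merely made explicit the two details the paper leaves implicit --- normality of $H$ via the equivariance $\pi(x^g)=g^{-1}\pi(x)g$, and the converse inclusion that traversing edges (in either direction) from $1$ only multiplies by elements of $\pi(X)^{\pm 1}$ --- both of which are handled correctly.
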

As we shall see later, the connectivity of a group-like graph is reflected in the properties of the bialgebras associated with it.

\section{The associated spaces}

\subsection{The cubical complexes $EQ$ and $BQ$}
Let $\mathrm{Q}_1$ be the graph with two vertices and one arrow connecting them; set $\mathrm{Q}_n = \mathrm{Q}_{1}\, \square\,\ldots \, \square\, \mathrm{Q}_1$ ($n$ factors). A \emph{$k$-face} of $\mathrm{Q}_n$ is a subgraph of $\mathrm{Q}_n$ isomorphic to $\mathrm{Q}_k$, obtained  by replacing $n-k$ of the copies of $\mathrm{Q}_{1}$ in the product above by one of its vertices.

For a graph $Q$, we shall refer to the morphisms $\mathrm{Q}_n\to Q$ as the \emph{$n$-cubes of $Q$}. 
The restriction of an $n$-cube to one of the $k$-faces of $\mathrm{Q}_n$ is a \emph{$k$-face} of the $n$-cube.
Each arrow $a$ of $Q$ gives rise to a canonical map $a: \mathrm{Q}_1 \to Q$. In a multiplicative $Q$, a \emph{product $n$-cube} is an $n$-cube of the form
$$\mathrm{Q}_n = \mathrm{Q}_1\,\square\ldots\square\, \mathrm{Q}_1\xrightarrow{a_1\,\square\, \ldots\,\square\, a_n} Q\,\square\ldots\square\, Q \overset{\mu_n}{\longrightarrow} Q,$$ 
where $a_i$ are arrows of $Q$ and the last map is the multiplication of $n$ factors in $Q$; we shall denote such $n$-cube by $a_1\,\square\, \ldots\,\square\, a_n$. A  \emph{product square} is a product 2-cube.

\begin{lem}
The $k$-faces of a product $n$-cube are product $k$-cubes. %If the 2-faces of an $n$-cube of $Q$ are all product squares, the $n$-cube itself is a product $n$-cube.
\end{lem}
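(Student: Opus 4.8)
The plan is to unwind the definitions of "product $n$-cube" and "$k$-face" and show that restricting the product-cube morphism to a $k$-face literally reproduces the recipe for a product $k$-cube. Recall that a $k$-face of $\mathrm{Q}_n$ is obtained by fixing $n-k$ of the factors $\mathrm{Q}_1$ at one of their two vertices (call each such vertex $\epsilon_i \in \{s_i, t_i\}$, i.e.\ either the source or the target of $\mathrm{Q}_1$), and leaving the remaining $k$ factors intact. So I would index the factors by $\{1,\dots,n\}$, let $I \subset \{1,\dots,n\}$ with $|I| = k$ be the set of ``free'' coordinates, and for each $j \notin I$ record the chosen endpoint. The inclusion of the $k$-face is then $\iota = \bigsquare_{i} \varphi_i : \mathrm{Q}_k \to \mathrm{Q}_n$, where $\varphi_i = \mathrm{id}$ for $i \in I$ and $\varphi_i$ is the constant map onto the chosen vertex for $i \notin I$.

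First I would write the product $n$-cube explicitly as the composite $\mu_n \circ (a_1 \,\square\, \cdots \,\square\, a_n)$ and compose it with $\iota$. Because Cartesian product of graph morphisms is functorial (it is a bifunctor, which is what makes the product associative as noted in the excerpt), the composite $(a_1 \,\square\, \cdots \,\square\, a_n) \circ \iota$ equals $b_1 \,\square\, \cdots \,\square\, b_n$ where $b_i = a_i$ for $i \in I$ and $b_i = a_i \circ \varphi_i$ for $i \notin I$. The key observation is that when $\varphi_i$ is the constant map onto a vertex $v$ of $\mathrm{Q}_1$, the composite $a_i \circ \varphi_i : \mathrm{Q}_0 \to Q$ is the constant map onto the vertex $s(a_i)$ (if $v$ is the source vertex) or $t(a_i)$ (if $v$ is the target vertex) of $Q$. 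Thus the fixed coordinates contribute constant vertex-factors to the product.

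Next I would feed this through $\mu_n$. Since each fixed coordinate contributes a constant vertex $u_i \in G$ rather than an arrow, multiplying by $\mu_n$ simply pre- and post-multiplies by these fixed vertices: the restriction becomes the map sending a $k$-cube input $(c_1,\dots,c_k)$ (the free arrows $a_i$, $i\in I$) to the $\mu$-product of the full tuple in which the $n-k$ slots are occupied by the constant vertices $u_i$. Using associativity of $\mu$ and the fact that the vertex set is a semigroup, I can collect the constant vertices and rewrite this product. The upshot is that the restriction is the $k$-fold product of the arrows $\{a_i : i \in I\}$, each conjugated/translated by the appropriate product of the fixed vertices — and by the group/semigroup structure on vertices this translated arrow is again a single arrow of $Q$ (here one uses that $\mu$ is a graph morphism, so multiplying an arrow by vertices yields an arrow). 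Hence the restriction is again of the form $a_{i_1}' \,\square\, \cdots \,\square\, a_{i_k}'$, i.e.\ a product $k$-cube.

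The main obstacle will be the bookkeeping in the last step: verifying that after inserting the fixed vertices into $\mu_n$ and regrouping, each free factor genuinely yields a single arrow of $Q$ rather than some composite, and that the ordering of the product is preserved so that the result is honestly a product $k$-cube and not merely something built from the same arrows. This is essentially a careful application of associativity of $\mu$ together with the bifunctoriality of $\square$; once the notation for the fixed-coordinate data is set up cleanly, the computation is routine but the indexing is the delicate part. I expect no conceptual difficulty, only the need for precise notation distinguishing source-fixed from target-fixed coordinates.
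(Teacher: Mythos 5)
Your proposal is correct and is essentially the paper's own argument, written out in more detail: the paper likewise absorbs each fixed vertex factor ($\mathrm{Q}_0$) into an adjacent $\mathrm{Q}_1$ factor, using $\mathrm{Q}_0\,\square\,\mathrm{Q}_1 = \mathrm{Q}_1$ together with the fact that multiplying an arrow by a vertex via $\mu$ yields again an arrow. The only difference is presentational — the paper states the absorption for $(n-1)$-faces and implicitly iterates, while you handle a general $k$-face in one pass with explicit indexing — so the key observation is the same.
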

\begin{proof}
If $\mathrm{Q}_0$ is the graph consisting of one vertex and no arrows, we have $\mathrm{Q}_0 \,\square\, \mathrm{Q}_1 =  \mathrm{Q}_1$. An $(n-1)$-face of an $n$-cube is obtained by replacing a copy of $\mathrm{Q}_1$ in the product by a  $\mathrm{Q}_0$ which can be grouped with the copy of $\mathrm{Q}_1$ that precedes or follows it; this establishes the lemma.
\end{proof}

For a multiplicative graph $Q=(G \leftleftarrows A)$ let $EQ$ be the cubical complex\footnote{By a ``cubical complex'' we mean a geometric realization of a precubical set.} whose $n$-dimensional faces correspond to the product $n$-cubes of $Q$ and the face maps -- to the inclusions of the $(n-1)$-faces  into $\mathrm{Q}_n$. The semigroup $G$ of vertices of $Q$ acts on the product $n$-cubes by  
$$(g,   a_1 \,\square\, \ldots \,\square\, a_n) \mapsto (g\cdot a_1) \,\square\, \ldots \,\square\, a_n$$
for $g\in G$ acting on $A$ the left via $\mu$. This action is compatible with the face maps and, hence, descends to an action of $G$ on $EQ$. 
We shall denote the orbit space of this action by $BQ$.

Given a product $k$-cube $a =  a_1 \,\square\, \ldots \,\square\, a_k$ and a product $m$-cube $b = b_1 \,\square\, \ldots \,\square\, b_m$ , one defines the product $(k+m)$-cube $a\,\square\, b$ as $a_1 \,\square\, \ldots \,\square\, a_k  \,\square\, b_1 \,\square\, \ldots \,\square\, b_m$. This operation on product cubes is compatible with the face maps so that there is an associative product 
$$EQ\times EQ\to EQ$$
given by $(a,b)\mapsto a\,\square\, b$. 

\medskip

For group-like graphs, the product cubes are easy to describe:

\begin{lem}\label{prod}
Let $Q=(G \leftleftarrows A)$ be a group-like graph associated with the rack $X$. Product $n$-cubes in  $Q$ are in one-to-one correspondence with $(n+1)$-tuples of the form
$(g, x_1,\ldots, x_n)$ with $g\in G$ and $x_i\in X$. 
\end{lem}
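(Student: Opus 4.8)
The plan is to write down an explicit map $\Phi$ from product $n$-cubes to $(n+1)$-tuples $(g,x_1,\ldots,x_n)$, defined intrinsically (without reference to a presentation $a_1\,\square\,\ldots\,\square\,a_n$), and then to prove it is a bijection by producing a preimage for every tuple and by showing that the whole morphism is recovered from the tuple. Recall from Theorem~\ref{main} that the arrows of $Q$ are the pairs $(h,y)\in G\times X$, with $s(h,y)=h$, $t(h,y)=h\pi(y)$, and two-sided action $g\cdot(h,y)=(gh,y)$, $(h,y)\cdot g=(hg,y^{g})$. Label the two vertices of $\mathrm{Q}_1$ by $0$ (source) and $1$ (target), so that the vertices of $\mathrm{Q}_n$ are the tuples $\epsilon\in\{0,1\}^n$. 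Given a product $n$-cube, I would set $g$ to be the image of the vertex $\mathbf 0=(0,\ldots,0)$, and, for each $i$, let $(g,x_i)$ be the image of the edge of $\mathrm{Q}_n$ in the $i$-th direction based at $\mathbf 0$; this edge has source $\mathbf 0$, so its image indeed has first coordinate $g$ and hence determines $x_i\in X$. Define $\Phi$ to send the cube to $(g,x_1,\ldots,x_n)$.

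Next I would record the effect of a presentation $a_j=(h_j,y_j)$ and deduce surjectivity. Writing $w_j=s(a_j)$ or $t(a_j)$ according as $\epsilon_j=0$ or $1$, the vertex $\epsilon$ maps to $w_1\cdots w_n$ and the edge in direction $i$ at $\epsilon$ maps to $w_1\cdots w_{i-1}\cdot a_i\cdot w_{i+1}\cdots w_n$, evaluated with the two-sided action. In particular, setting $P_j=h_{j+1}\cdots h_n$, one finds $g=P_0$ and $x_i=y_i^{\,P_i}$. Surjectivity of $\Phi$ is then immediate: for a given tuple, the presentation $a_1=(g,x_1)$ and $a_i=(1,x_i)$ for $i\geq 2$ has $P_j=1$ for all $j\geq 1$, so $\Phi$ returns exactly $(g,x_1,\ldots,x_n)$.

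The hard part, and the main obstacle, is injectivity: I must show that the morphism $\mathrm{Q}_n\to Q$ depends only on $(g,x_1,\ldots,x_n)$ and not on the chosen arrows. Since $\mathrm{Q}_n$ is connected (for $n\geq 1$; the case $n=0$ is trivial), it suffices to express the image of an arbitrary edge through this data. Using the augmentation identity $\pi(y^{g})=g^{-1}\pi(y)g$ together with $x_j=y_j^{\,P_j}$, I would rewrite $s(a_j)=P_{j-1}P_j^{-1}$ and $t(a_j)=P_{j-1}\pi(x_j)P_j^{-1}$, so that $w_j=P_{j-1}\pi(x_j)^{\epsilon_j}P_j^{-1}$ uniformly. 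The products $w_1\cdots w_{i-1}$ and $w_{i+1}\cdots w_n$ then telescope, every factor $P_j$ cancels, and the image of the direction-$i$ edge at $\epsilon$ becomes
$$\Bigl(\,g\prod_{j\neq i}\pi(x_j)^{\epsilon_j},\ \ x_i^{\,\prod_{j>i}\pi(x_j)^{\epsilon_j}}\Bigr)$$
(ordered products, increasing in $j$), an expression in $(g,x_1,\ldots,x_n)$ alone, the second coordinate being a rack expression since $x_i^{\pi(x_j)}=x_i^{x_j}$. This cancellation is exactly where the group-like hypothesis enters, as it is what makes the inverses $P_j^{-1}$ and the conjugation identity available. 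Once it is established the morphism is determined by $(g,x_1,\ldots,x_n)$, so $\Phi$ is injective, completing the proof.
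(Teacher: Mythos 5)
Your proof is correct, and it is organized differently from the paper's. The paper proves the lemma by first working out the $n=2$ case explicitly --- drawing the product square $(g_1,x_1)\,\square\,(g_2,x_2)$ and observing that all its cells depend only on $h=g_1g_2$, $z_1=x_1^{g_2}$, $z_2=x_2$, which yields the normal form $(h,z_1)\,\square\,(1,z_2)$ --- and then handles $n>2$ by induction, using the associativity of the $\square$ operation to absorb each factor one at a time. You instead avoid induction entirely: you define the bijection intrinsically (reading off the image of the base vertex $\mathbf 0$ and of the $n$ edges based there), get surjectivity from the same normal-form presentation $(g,x_1)\,\square\,(1,x_2)\,\square\,\ldots\,\square\,(1,x_n)$ the paper uses, and prove injectivity by a closed-form telescoping computation, valid for all $n$ at once, writing $s(a_j)=P_{j-1}P_j^{-1}$ and $t(a_j)=P_{j-1}\pi(x_j)P_j^{-1}$ so that every cell of the cube is expressed in terms of $(g,x_1,\ldots,x_n)$ alone. (I checked the computation; your formula for the direction-$i$ edge at $\epsilon$ specializes, for $n=2$, exactly to the square displayed in the paper's proof.) What the paper's route buys is brevity: the two-dimensional picture plus associativity does all the work. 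What your route buys is an explicit description of \emph{every} face of a product $n$-cube in terms of the rack data --- essentially the combinatorial description of the cells of the rack space $BQ$, which the paper only obtains implicitly --- and a clear identification of exactly where the group-like hypothesis (availability of the inverses $P_j^{-1}$ and of the conjugation identity $\pi(y^g)=g^{-1}\pi(y)g$) enters. One cosmetic remark: the appeal to connectedness of $\mathrm{Q}_n$ is not what you actually need; what you use is simply that every vertex of $\mathrm{Q}_n$ is an endpoint of some edge (or that vertex images telescope by the same computation), so the edge formulas determine the whole morphism.
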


\begin{proof} Recall that $A=G\times X$. We shall see that each product $n$-cube can be uniquely written as 
$$(g, x_1)\,\square\, (1, x_2) \,\square\,\ldots \,\square\, (1, x_n)$$
with $g\in G$ and $x_i\in X$. 

Let us first consider  product squares. The product square $(g_1, x_1)\,\square\, (g_2, x_2)$ is of the form
$$\begin{CD}
g_1g_2\pi(x_2) @>x_1^{g_2\pi(x_2)}>> g_1\pi(x_1)g_2\pi(x_2)\\
@AAx_2A @AAx_2A\\
g_1g_2 @> x_1^{g_2}>>g_1\pi(x_1)g_2.
\end{CD}$$
Here the corners of the square are vertices of $Q$, the arrows are labelled by the elements of $X$ rather than $G\times X$ since the corresponding element of $G$ is indicated at the source of the arrow. %, and the dot denotes the action of $G$ on $X$. 
Now, assume that we are given arbitrary $h=g_1g_2$, $z_1= x_1^{g_2}$ and $z_2=x_2$. Then, the above product square is of the form
$$\begin{CD}
h\pi(z_2) @>z_1^{\pi(z_2)}>> h\pi(z_1)\pi(z_2)\\
@AAz_2A @AAz_2A\\
h @>  z_1>>h\pi(z_1),
\end{CD}$$
and this establishes the lemma for product squares.

The case of the $n$-cubes with $n>2$ now follows by induction from the associativity of the $\square$ operation.
\end{proof}

In the notations of Lemma~\ref{prod}, the action of $G$ on $EQ$ is of the form
$$(g, (g', x_1, \ldots, x_n)) \mapsto (gg', x_1, \ldots, x_n);$$
in particular, it is free and $EQ$ is a covering space of $BQ$.

\begin{lem}
Let $Q$ be a group-like graph associated with the rack $X$. The space $BQ$ coincides with the rack space of $X$.
\end{lem}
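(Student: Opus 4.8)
The plan is to show that the cubical complex $BQ$, which is the quotient of $EQ$ by the free $G$-action described in Lemma~\ref{prod}, agrees cell-by-cell with the standard rack space $BX$ of the rack $X$, and that the face maps match under this identification. Recall that the rack space (also known as the cubical nerve of the rack) is the cubical complex whose set of $n$-cells is $X^n$, with the $2n$ face maps given by deleting a coordinate in one of two ways: the face maps $\partial_i^0$ simply drop $x_i$, while the face maps $\partial_i^1$ drop $x_i$ and act on the remaining coordinates to its left by $x_i$, reflecting the self-distributive structure. So the main task is to produce a bijection on cells that is natural with respect to these two families of face maps.

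First I would use Lemma~\ref{prod} to identify the product $n$-cubes of $Q$ with tuples $(g, x_1, \ldots, x_n)$, writing each canonically as $(g,x_1)\,\square\,(1,x_2)\,\square\,\ldots\,\square\,(1,x_n)$. Combining this with the explicit description of the $G$-action, namely $(g,(g',x_1,\ldots,x_n))\mapsto (gg', x_1,\ldots,x_n)$, I would observe that the action only moves the group coordinate $g'$ by left translation and leaves the $x_i$ untouched; since this action is free and transitive on the $g$-coordinate, each $G$-orbit of product $n$-cubes contains exactly one representative with $g=1$. Hence the $n$-cells of the quotient $BQ$ are in canonical bijection with the tuples $(x_1,\ldots,x_n)\in X^n$, which is exactly the set of $n$-cells of the rack space of $X$.

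Next I would match the face maps. The two families of $(n-1)$-faces of the cube $\mathrm{Q}_n$ come from replacing the $i$-th copy of $\mathrm{Q}_1$ by its source vertex or by its target vertex, and by Lemma~\ref{faces} (the $k$-face lemma) the restriction of a product cube to such a face is again a product cube. For the $i$-th coordinate, the source-vertex face keeps the configuration of arrows on the left unchanged, so on the level of cells it drops $x_i$ and leaves $x_1,\ldots,\widehat{x_i},\ldots,x_n$ intact; this matches the rack-space face map $\partial_i^0$. The target-vertex face, however, pushes the arrows to the left of the $i$-th slot across the arrow labelled $x_i$, and from the product-square computation in the proof of Lemma~\ref{prod}, passing an arrow labelled $x_j$ across one labelled $x_i$ replaces its label by $x_j^{\pi(x_i)}=x_j^{x_i}$ in the rack $X$; this is precisely the rack-space face map $\partial_i^1$. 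Verifying this compatibility carefully is the heart of the argument.

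The main obstacle I anticipate is exactly this bookkeeping for the ``target'' face maps: one must check that after quotienting by $G$ the normalization $g\mapsto 1$ is preserved by the face maps, and that the induced action on the coordinates to the left of a deleted $x_i$ reproduces the rack operation $x_j\mapsto x_j^{x_i}$ on the nose, including getting the direction of the action and the ordering of the iterated conjugations right when several coordinates lie to the left. This requires unwinding the two-sided $G$-action in $\mu$ together with the commutative squares displayed in the proof of Lemma~\ref{prod}, and then propagating the single-square identity to an arbitrary $(n-1)$-face by the inductive, associativity-based argument already used there. Once the cell bijection and both families of face maps are shown to agree, the equality of $BQ$ with the rack space of $X$ follows, since a cubical complex is determined by its cells together with its face maps.
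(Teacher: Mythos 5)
Your proposal is correct and takes exactly the route the paper intends: the paper's proof is only the remark that the claim is immediate from the definition of the rack space and the product-square description in the proof of Lemma~\ref{prod}, and your argument---normalizing each product $n$-cube to a representative $(1,x_1,\dots,x_n)$ via the free $G$-action and checking that the source faces give $\partial_i^0$ while the target faces conjugate the coordinates to the left of $x_i$ by $x_i$, giving $\partial_i^1$---is precisely the bookkeeping that remark suppresses. You correctly identify the target-face normalization (pushing the group element $\pi(x_i)$ leftward, which conjugates the preceding arrows, then absorbing it into the basepoint killed by the $G$-quotient) as the only delicate point, and it is resolved exactly by the displayed product square.
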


The proof is immediate and follows from the definition of the rack space \cite{FRS} and the description of the product squares given in the proof of Lemma~\ref{prod}.

\subsection{The spaces $EQ$ and $BQ$ via the James reduced product}
The cubical complex $EQ$ can be defined in terms of generators and relations. Consider $Q$ as a topological space (one-dimensional simplicial or cubical complex) with a two-sided continuous $G$-action; the group $G$ is then the 0-skeleton of $Q$.  Write $F(Q)$ for the free monoid generated by the points of $Q$ whose identity is the vertex corresponding to $1\in G$. The monoid $F(Q)$ carries the natural topology induced by the topology on $Q$; it is known as the \emph{James reduced product} of $Q$, see \cite{J}. 

Consider the following set of equivalence relations on $F(Q)$:
\begin{equation}\label{rel}
g * x \sim g\cdot x, \quad x* g \sim x\cdot g,
\end{equation}
for each $g\in G, x\in Q$, where $*$ is the product in $F(Q)$ and $\cdot$ denotes the $G$-action on $Q$.
\begin{prop}
For a group-like graph $Q$, the monoid $EQ$ is the quotient of $F(Q)$ by the above equivalence relations.
\end{prop}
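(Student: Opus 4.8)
The plan is to construct a continuous monoid isomorphism $\bar\phi\colon F(Q)/\!\sim\ \to EQ$ by hand and to identify the relations \eqref{rel} with its defining congruence. First I would define a map $\phi\colon F(Q)\to EQ$ on generators by sending a point of $Q$ to ``itself'' in $EQ$: a vertex $g\in G$ (in particular the basepoint $1$, the identity of $F(Q)$) goes to the corresponding $0$-cube, and a point in the interior of an arrow $a$ at parameter $t$ goes to the point at parameter $t$ in the $1$-cell of $EQ$ labelled by the product $1$-cube $a$. Using the associative product $(a,b)\mapsto a\,\square\, b$ on $EQ$, I would extend $\phi$ multiplicatively; since the realization of $a\,\square\, b$ is $[0,1]^{k}\times[0,1]^{m}=[0,1]^{k+m}$, a word $p_1*\dots*p_k$ maps to the point of the resulting product cube with the prescribed coordinates, and continuity of $\phi$ is clear from the continuity of the cube products.

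Next I would check that $\phi$ factors through \eqref{rel}. The key observation is that $\square$-multiplying the $0$-cube $g$ on the left of an arrow $a$ produces exactly the translated arrow $g\cdot a$ (since $\mathrm{Q}_0\,\square\,\mathrm{Q}_1=\mathrm{Q}_1$ and the multiplication is $\mu(g,a(-))=g\cdot a(-)$), and similarly on the right; hence $\phi(g*x)=\phi(g\cdot x)$ and $\phi(x*g)=\phi(x\cdot g)$ for all $g\in G$, $x\in Q$. Thus $\phi$ descends to a continuous homomorphism $\bar\phi$. Surjectivity is then immediate from Lemma~\ref{prod}: every point of $EQ$ lies in the interior of a unique cell indexed by a tuple $(g,x_1,\dots,x_n)$, and such a point is $\bar\phi$ of the word $q_1*\dots*q_n$, where $q_1$ is the appropriate interior point of the arrow $(g,x_1)$ and $q_i$ the interior point of $(1,x_i)$ for $i\ge 2$.

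For injectivity I would exhibit a normal form. Using $g*p\sim g\cdot p$ and $p*g\sim p\cdot g$, any two adjacent vertices multiply and any vertex adjacent to an interior arrow point is absorbed into it; consequently every word is equivalent either to a single vertex $g\in G$ or to a product $q_1*\dots*q_n$ of interior arrow points. To bring the latter to the cell form of Lemma~\ref{prod}, I would standardize all arrows beyond the first: if $q_i$ lies in a non-standard arrow $(h,x)$, I rewrite $q_i\sim h*q_i'$ with $q_i'$ in $(1,x)$ and absorb $h$ into the preceding factor, which by \eqref{graphtorack} realizes precisely the identity $(g_1,x_1)\,\square\,(h,x)=(g_1h,x_1^{\,h})\,\square\,(1,x)$ of product squares. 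Iterating yields a unique normal form that matches the cell decomposition of $EQ$, so $\bar\phi$ is a product-preserving bijection.

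The main obstacle is topological rather than algebraic: I must upgrade the continuous bijection $\bar\phi$ to a homeomorphism. For this I would compare the James topology on $F(Q)/\!\sim$ with the cubical topology on $EQ$ stage by stage, writing both as colimits over the natural word-length/skeleton filtration and checking that $\bar\phi$ restricts to a homeomorphism on each filtration stage. The normal-form description guarantees that these stages correspond cell by cell, and since both spaces are quotients of disjoint unions of cubes by the face identifications, a continuous bijection that is a homeomorphism on each stage is a homeomorphism. This is the only step where the specific construction of the James reduced product, rather than its underlying monoid, enters.
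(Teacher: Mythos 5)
Your proposal is correct and follows essentially the same route as the paper's proof: both construct the canonical homomorphism $F(Q)\to EQ$, check that it kills the relations (\ref{rel}), and use the unique normal form $g*x_1*\ldots*x_n$ (the content of Lemma~\ref{prod}) to match cells of $F(Q)/R$ bijectively with cells of $EQ$. Your extra care about upgrading the continuous bijection to a homeomorphism via the word-length filtration is a point the paper passes over silently, but it does not change the substance of the argument.
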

\begin{proof}
Denote the congruences (\ref{rel}) by $R$. The space  $F(Q)/R$ has the natural structure of a cubical complex. Indeed, $F(Q)$ is the union of cells of the form
$q_1 *\ldots *q_m$ where each $q_i$ is either a fixed element of $G$ or varies over a fixed arrow in $A$. %; the dimension of the cell
The congruences $R$ respect this cell subdivision of $F(Q)$; each cell in $F(Q)/R$ can be written as $a_1 *\ldots *a_m$ with $a_i\in A$. Moreover, modulo $R$ each of these cells can be uniquely written as $g* x_1*\ldots *x_n$ where the $x_i$ are edges emanating from 1.

The space $EQ$ contains a copy of  $Q$ so that there is a unique continuous surjective homomorphism of $F(Q)$ to $EQ$ which descends to a map 
 $$F(Q)/R\to EQ$$
that maps $a_1*\ldots * a_n$ to $a_1\,\square\ldots\square\, a_n.$ The preimage of a cell $(g, x_1,\ldots, x_n)$ is precisely the cell  $g* x_1*\ldots *x_n$, so that this homomorphism is, actually, an isomorphism.
%Any arrow $(g,x)$
%Any element of $F(|Q|)$ can be written as a product of vertices (elements of $G$) and the interior points of the edges.
%If $a$ is an arrow of $Q$, write $|a|$ for the corresponding oriented edge of $|Q|$. The space $F(|Q|)$ is a cubical complex whose $n$-cubes are of the form $|a_1|*\ldots * |a_n|$.
%$$|a_1|*\ldots * |a_n| \mapsto a_1\,\square\ldots\square\, a_n.$$
%which sends the cube $|a_1|*\ldots * |a_n|$ to $a_1\,\square\ldots\square\, a_n$; the image of $g* |a|$ under this map coincides with the image of $|g\cdot a|$; the same is true for the second relation in (\ref{rel}). Modulo (\ref{rel}), each $n$-cube of  $F(|Q|)$ is equivalent to one and only one cube of the form $g*|x_1|*\ldots *|x_n|$ where the $x_i$ are edges emanating from the 1 in $|Q|$; therefore the map from $F(|Q|)$ modulo (\ref{rel}) to  $EQ$ is also one-to-one and, hence, a homeomorphism.
\end{proof}
\begin{rem}
This construction of the rack space makes sense in a somewhat more general context. Let $Y$ be a topological space and $G\subset Y$ a subset with the group structure such that there is a left\footnote{The same construction works for right and for two-sided actions.} action of $G$ on $Y$ extending the left multiplication on $G$. Denote by $E(Y, G)$ the quotient of the free monoid $F(Y)$ by the relations
$$g * x \sim g\cdot x$$
for all $g\in G, x\in Y$. There is a left action of $G$ on $E(Y, G)$ and we can define $B(Y, G)$ as the quotient space of $E(Y, G)$ by this action. For instance, given a left $G$-set $X$, one can take $Y= G\sqcup X$. A possibly more interesting example is a Cayley graph for $G$ or, indeed, a graph determined by an arbitrary subset $S\subseteq G$: we set $Y$ to be the graph whose vertices are elements of $G$ and whose edges are pairs $(g, gs)$ for all $g\in G$ and $s\in S$. Note that the 1-skeleton of $E(Y, G)$ in this case is a multiplicative graph 
%\footnote{or, rather, a geometric realization of a multiplicative graph.} 
into which the Cayley graph is embedded.
\end{rem}

\subsection{The action of $\pi_1 BQ$ on $\pi_n EQ$}
The fundamental group of any topological monoid (or, indeed, of any space with a unital multiplication) is abelian. Moreover, for any covering $p: E\to B$ such that $E$ is a topological monoid $\pi_1 E$ lies in the centre of $\pi_1 B$; in particular, $\pi_1 EQ$ lies in the centre of $\pi_1 BQ$.  

The proof is a standard exercise in topology. Let $p(1)$ be the basepoint in $B$; for a curve $\gamma$ starting at $p(1)$ write $\overline{\gamma}$ for its lifting to $E$ with $\overline{\gamma}(0)=1$. Let $\alpha,\beta:[0,1]\to B$ be two closed paths starting and ending at $p(1)$ and assume that $\overline{\alpha}$ is closed in $E$. Denote by $\overline{\alpha}_\tau$ and $\overline{\beta}_\tau$ the reparametrizations of $\overline{\alpha}$ and $\overline{\beta}$, respectively, which are constant outside of the interval $[\tau,\tau+1/2]$. Note that the pointwise product curves $\overline{\alpha}_{\tau_1} \overline{\beta}_{\tau_2}$ are fixed-end homotopic in $E$ for all $\tau_1,\tau_2\in [0,1/2]$; since $\overline{\alpha}$ is closed, their projections to $B$ define closed loops in $B$. Now, writing $\circ$ for the concatenation of loops in $B$ we see that 
$$\overline{\alpha\circ\beta} =\overline{\alpha}_0 \overline{\beta}_{1/2},$$
while
$$\overline{\beta\circ\alpha} =\overline{\beta}_0 \overline{\alpha}_{1/2} \sim \overline{\beta}_{1/2} \overline{\alpha}_{0} = \overline{\alpha}_0 \overline{\beta}_{1/2},$$
which means that $\alpha\circ\beta$ equals $\beta\circ\alpha$ as an element of $\pi_1 B$.

A very similar argument shows that $\pi_1 B$ acts trivially on $\pi_n E$ for all $n$. Since for $n>1$ the groups $\pi_n E$ coincide with $\pi_n B$, we see that the rack space $BQ$ is always homotopy simple, the fact that was first proved in \cite{FRS}.

\section{Examples}

\subsection{Path graphs}\label{pathgraphs} Probably, the simplest non-trivial example of a multiplicative graph is the graph of pairs of elements in a (semi)group. For a semigroup $G$, let $A=G\times G$ with the maps $s$ and $t$ being the projections onto the first and on the second factors respectively, and the two-sided action of $G$ being the action of the diagonal in $G\times G$ by multiplication. When $G$ is group-like, the corresponding augmented rack is the identity map $G\to G$ (and the non-augmented rack is the conjugation rack of $G$).

An extension of this example is the graph whose arrows are sequences of $n$ elements in $G$, with $s$ and $t$ being the first and the last element of the sequence. When $G$ is a topological semigroup, one can consider the graph whose edges are directed paths on $G$; again, with $s$ and $t$ being the beginning and the end of the path.  
%In this situation, there is a topology on the space of arrows and it is natural to consider the space $PG$ not as a one-dimensional complex but as the space of pairs $(\gamma, y)$ where $\gamma: [0,1]\to G$ is a path and $y\in [0,1]$, with the identifications $(\gamma_1, 0) = (\gamma_2, 0)$ whenever $\gamma_1(0) = \gamma_2(0)$ and $(\gamma_1, 1) = (\gamma_2, 1)$ whenever $\gamma_1(1) = \gamma_2(1)$.
If $G$ is a Lie group, it makes sense to consider the group-like graph $LG$ whose arrows are directed segments of geodesics in $G$, parametrized by length. An important feature of this graph is that the maps $$s,t: LG\to G$$ are vector bundles such that the action of $G$ on $LG$ is linear on the fibres. Indeed, each geodesic $\gamma$ starting at $g\in G$ is determined by a tangent vector in $T_g G$ whose direction coincides with that of $\gamma$ and whose length gives the length of $\gamma$, and, therefore, the fibre of $s$ over $g\in G$ can be identified with $T_g G$; the same argument goes for the map $t$. This is the most basic example of a \emph{linear Lie graph}, see Section~\ref{LLG}.

%Consider the \emph{opposite} graph $Q^{\mathrm{op}}$ of $Q$, obtained from $Q$ by reversing the arrows. If $Q$ is group-like so is ${Q}^{\mathrm{op}}$, with the same multiplication map. The operation of arrow reversal interchanges the two dialgebras defined with the help of the maps $t$ and $s$. On the other hand, $Q^{\mathrm{op}}$ is isomorphic to $Q$. Indeed, let $Q' = (G^{\mathrm{op}} \leftleftarrows A)$ be the group-like graph over the opposite group of $G$ whose source and target maps are the compositions
%$$A\underset{t}{\overset{s}{\rightrightarrows}} G \xrightarrow{\ g\mapsto g^{-1}}  G^{\mathrm{op}}$$
%and the multiplication map is defined using the fact that a left $G$-action is a right  $G^{\mathrm{op}}$-action and vice versa. 

%This definition gives a canonical isomorphism between $Q'$ and $Q$. On the other hand, there is an isomorphism $Q^{\mathrm{op}} \to Q'$ which sends a vertex $g$ to $g^{-1}$ and an arrow $a^{\mathrm{op}}$ (opposite to $a$) to $s^{-1}(a)\cdot a^{\mathrm{}} \cdot t^{-1}(a)$. 

\subsection{1-skeleta of multiplicative cubical complexes and of simplicial monoids}
%There is one important difference between simplicial complexes and cubical complexes: while the product of two simplices is almost never a simplex, the product of any two cubes is a cube.
For any cubical complex with an associative cubical multiplication on it, the 1-skeleton is a multiplicative graph. Each multiplicative graph $Q$ can be obtained in such a way, being the 1-skeleton of the cubical complex $EQ$. Here, as before, a cubical complex is a geometric realization of a precubical set $X$ (a ``cubical set without degenerations") and a cubical multiplication is a geometric realization of an associative map $X\times X\to X$. Recall that the $n$-cubes of the product of two precubical sets $X$ and $Y$ are pairs of cubes of $X$ and $Y$ whose dimensions sum up to $n$.

Also, the 1-skeleton of a simplicial monoid is a multiplicative graph whose vertices are the 0-simplices, whose edges are the 1-simplices and whose source and target maps are the face maps. The monoid of 0-simplices can be identified with the monoid of the degenerate 1-simplices, and, therefore, acts on the 1-simplices by multiplication; since the face maps are homomorphisms, this gives a multiplicative graph. When the monoid of 0-simplices is, actually, a group, its action respects the non-degeneracy of 1-simplices. In particular, in this case, one obtains a multiplicative graph whose vertices are the 0-simplices of the simplicial monoid and whose arrows are the {non-degenerate} 1-simplices.

\subsection{Knot racks as multiplicative graphs}

One of the most useful examples of racks is the rack associated with a framed knot. In terms of multiplicative graphs, this construction has the following form.

Let $K$ be a parametrized framed knot in $\mathbb{R}^3$. Choose a basepoint in the exterior of $K$ and let $A$ be the set of all homotopy classes of smooth loops in $\mathbb{R}^3$ which start at the basepoint and cross the knot $K$ exactly once with the positive sign (this means that at the crossing point the tangent vector to the loop, the tangent vector to the knot and the framing vector form a positive basis in  $\mathbb{R}^3$). Each $a\in A$ gives rise to two elements of the knot group $\pi_1(\mathbb{R}^3 - K)$ as follows. Represent $a$ by a curve $\gamma$ and define $s(a)$ as the class of the loop obtained by moving $\gamma$ at the crossing point off the knot in the direction opposite to the framing; similarly, $t(a)$ is obtained by shifting $\gamma$ off $K$ along the framing. These maps are evidently well-defined. Moreover, there is a two-sided action of $\pi_1(\mathbb{R}^3 - K)$ on $A$ by pre- and post-composing the loops which cross the knot once with the loops that never cross the knot. Therefore $(\pi_1(\mathbb{R}^3 - K) \leftleftarrows A)$ is a group-like graph. 

It is quite clear that this construction indeed is equivalent to the usual knot rack. Indeed, if $s(a)$ is trivial, a loop $\gamma$ representing $a$ bounds a disk in the exterior of $K$. This disk can be squeezed, without moving the crossing point of $\gamma$ with $K$, to an interval connecting the basepoint with a point on the knot; this is how the knot rack is normally defined. 

The space $EQ$ for a knot rack also has a natural definition in terms of the curves which cross the knot. Namely, the set of homotopy classes of loops that intersect the knot $n$ times, each of them positively, coincides with the set of product $n$-cubes in the corresponding group-like graph. Indeed, each loop that crosses the knot $n$ times is a concatenation of $n$ loops that hit the knot exactly once. Such a decomposition is not unique; however, the product cube made of these $n$ loops is well-defined.

We should also note that, just as in the case of usual knot racks, here framed knots in $\mathbb{R}^3$ can be replaced by $n-2$-dimensional framed submanifolds of an $n$-manifold, or even by $n-k$-dimensional submanifolds with $k>2$. In this latter case, the fundamental group of the knot exterior should be replaced by its $k-1$st homotopy group. Probably, the most basic example of a graph defined by a codimension two subset is the group-like graph associated with the complement of an $n$-point set in $\mathbb{R}^2$. Its vertices are the elements of the free group on $n$ generators $x_1,\ldots , x_n$ and the arrows are in one-to-one correspondence with pairs of elements of the form $(ab, a x_k b)$ for some $x_k$. The rack that corresponds to such a graph is the union of the conjugacy classes of the generators in the free group.

\subsection{String links with double points}

An interesting example of a multiplicative graph which is not group-like is provided by string links with one double point.

Given $n>0$, denote by $L_n$ the monoid of isotopy classes of string links on $n$ strands and let $L_n^{\bullet}$ be the set of isotopy classes of string links with one transversal double point (see \cite{CDbook} for the definitions). A string link with a double point can be composed with a usual string link on either side and this gives two commuting actions 
$$L_n\times L_n^{\bullet} \to L_n^{\bullet}$$
and
$$ L_n^{\bullet}\times L_n \to L_n^{\bullet}.$$
There are also two maps  $L_n^{\bullet}\to L_n$ given by the positive and the negative resolution of the double point according to the Vassiliev skein relation; these two maps are compatible with the actions of $L_n$ on $L_n^{\bullet}$. This means that $\mathcal{L}_n:= (L_n \leftleftarrows L_n^{\bullet})$ is a multiplicative graph.

The corresponding space $E\mathcal{L}_n$ has been mentioned in the literature; namely, it was constructed by Matveev and Polyak  in \cite[page 229]{MaPo}.

\medskip

The graph $\mathcal{L}_n$ has a group-like subgraph which consists of pure braids with one double point. Two pure braids involved in the resolution of a double point differ by an insertion of a generator: if the positive resolution can be written as $ab$, the negative is of the form $aA_{ij}b$, where $i$ and $j$ are the numbers of the strands which cross at the double point\footnote{There is a freedom of choosing the sign in the Vassiliev skein relation for string links which involves the relative order in each pair of strands; this is the same as choosing between a generator and its inverse for each $i,j$.}. The corresponding rack is the union of the conjugacy classes of the generators in the pure braid group.

\section{Hopf algebras associated with multiplicative graphs}
Any group $G$ gives rise to two Hopf algebras. One is the group algebra $\f[G]$; it carries the filtration by the powers of the augmentation ideal $I(G)$. The associated graded Hopf algebra $\D(G)$ is the universal enveloping algebra of the graded Lie algebra coming form the lower central series of $G$.
In this section we construct the analogs of both Hopf algebras for group-like graphs. 

\subsection{The Loday-Pirashvili category}\label{LPC} Let us review some notions from  \cite{LP}. The \emph{Loday-Pirashvili category} $\LM$ over a given field $\f$ (which we assume to be of characteristic zero) has, as objects, pairs of vector spaces $$\bigl(U\xrightarrow{\ f}V\bigr)$$ over $\f$ together with a linear map between them.  The morphisms are the commutative squares of the form 
$$\begin{CD}
U @>h_1>> U'\\
@VVfV @VVf'V\\
V @>  h_0>>V'.
\end{CD}$$
The category $\LM$ is a tensor category with the tensor product\footnote{Hereafter we denote by $`+"$ the direct sum of vector spaces.}
$$\bigl(U\xrightarrow{\ f}V\bigr)\otimes \bigl(U'\xrightarrow{\ f'}V'\bigr) = \bigl(U \otimes V'+V\otimes U'  \xrightarrow{\ f\otimes \mathrm{Id} + \mathrm{Id} \otimes f'} V\otimes V' \bigr).$$
The natural explanation for this tensor product comes from considering $\LM$ as the category of $1$-jets of differential graded vector spaces, that is, the quotient of the category of (non-negatively graded) chain complexes over $\f$ by the subcategory of the chain complexes with trivial components in degrees 0 and 1.

One may speak of Lie algebras and Hopf algebras in $\LM$; the Milnor-Moore and the Poincar\'e-Birkhoff-Witt theorems are then valid in $\LM$. 
An algebra in $\LM$ is a pair 
\begin{equation}\label{alg}
(\A\xrightarrow{f}\H),
\end{equation}
with $\H$ an algebra, $\A$ an $\H$-bimodule and $f$ a bimodule map. A bialgebra in $\LM$ has, in addition, a compatible dual structure, namely a coproduct $\Delta_0:\H\to \H \otimes \H$
which makes $\H$ into a bialgebra, and the two-sided coaction 
$$\Delta_1: \A\to \A\otimes \H + \H\otimes\A,$$ 
which is an $\H$-bimodule map satisfying
$$\Delta_0\circ f = (f \otimes \mathrm{Id} + \mathrm{Id}\otimes f)\circ \Delta_1.$$
A bialgebra is a Hopf algebra if it has an antipode; we shall state the properties satisfied by the antipode  in Section~\ref{mga}.

A Lie algebra in $\LM$ is a pair 
\begin{equation}\label{liealg}
(M\xrightarrow{f}\g),
\end{equation}
where $\g$ is a Lie algebra, $M$ is a right\footnote{We take right, rather than left, modules so as to have the same conventions as \cite{LP}.} module over $\g$ and $f$ is a $\g$-module morphism. In this situation, there exists a bracket on $M$ that gives it the structure of a Leibniz algebra:
$$[x,y]= [x, f(y)],$$
where the bracket on the right-hand side denotes the right action of $\g$ on $M$. Each Leibniz algebra $L$ can be obtained this way by taking the Lie algebra $\g$ to be the maximal antisymmetric quotient of $L$ and the map $f$ to be the corresponding quotient map. We refer to \cite{LP} for more details.

If $M$, considered as a Leibniz algebra, is, actually, a Lie algebra on which $\g$ acts by derivations, the Lie algebra $(M,\g)$ is a \emph{differential crossed module}. Differential crossed modules can be identified with the differential graded Lie algebras whose only nonzero terms are in degrees 0 and 1.

\subsection{Lie algebras in $\LM$ and differential graded Lie algebras}\label{dgla} 
%Since we mentioned the differential graded Lie algebras, we should make the following remark: 
There is one important observation that we should make even though it will not be used in what follows:
the Lie algebras in $\LM$ are related to the differential graded Lie algebras in the same way as the multiplicative graphs are related to the products on cubical complexes.

Namely, given a differential graded Lie algebra $\g_*$:
$$\ldots\xrightarrow{d} \g_1 \xrightarrow{d} \g_0,$$ 
the map $\g_1 \xrightarrow{d} \g_0$ together with the restriction of the bracket of $\g_*$ to the maps $\g_1\otimes\g_0\to \g_1$ and  $\g_0\otimes\g_0\to \g_0$ is a Lie algebra in $\LM$.

Conversely, given a Lie algebra $\g= \bigl(\g_1 \xrightarrow{d} \g_0\bigr)$ in $\LM$ with the Lie bracket denoted by $[\,\cdot\,,\, \cdot\,]$, consider the free Lie algebra on the vector space $\g_0+\g_1$. It is a differential graded Lie algebra with the differential induced by $d$; we denote its Lie bracket by $\llbracket\,\cdot\,,\, \cdot\,\rrbracket$. Let $E\g$ be the quotient of this free differential graded Lie algebra by the relations
\begin{equation}\label{relations}
\llbracket x, y \rrbracket = [x,y]
\end{equation}
where either $x,y\in \g_0$, or $x\in \g_1$ and $y\in \g_0$.
\begin{prop}
The differential graded Lie algebra $E\g$, considered modulo terms of degree two and higher, coincides with $\g$ as a Lie algebra in $\LM$.
\end{prop}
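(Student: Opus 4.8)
The plan is to unwind the two constructions and check that passing from $\g$ to $E\g$ and then truncating in degrees $\geq 2$ returns $\g$. First I would record exactly what $E\g$ is in low degrees. The free Lie algebra on $\g_0 + \g_1$ is graded by assigning $\g_0$ degree $0$ and $\g_1$ degree $1$, and its degree-$0$ part is the free Lie algebra $L(\g_0)$ on $\g_0$, while its degree-$1$ part is the free $L(\g_0)$-module generated by $\g_1$ (spanned by iterated brackets $\llbracket \cdots \rrbracket$ containing exactly one factor from $\g_1$). Working modulo terms of degree two and higher means keeping only these two pieces. So before imposing any relations, the truncated object is $\bigl(\text{free } L(\g_0)\text{-module on }\g_1 \xrightarrow{\ d\ } L(\g_0)\bigr)$, with $d$ the degree-$(-1)$ differential induced by the given $d:\g_1\to\g_0$.

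Next I would impose the relations \eqref{relations} and see how they cut this down in degrees $0$ and $1$. The relations with $x,y\in\g_0$ are $\llbracket x,y\rrbracket = [x,y]$, which forces the free Lie algebra $L(\g_0)$ to collapse onto $\g_0$ itself with its given Lie bracket; that handles the degree-$0$ component, identifying it with $\g_0$. The relations with $x\in\g_1$, $y\in\g_0$ read $\llbracket x,y\rrbracket = [x,y]$, where the right-hand side is the given right $\g_0$-action of $\g$ on $\g_1$ (an element of $\g_1$). This says that acting on the generator $x\in\g_1$ by $y\in\g_0$ inside the free module equals the prescribed module action landing back in $\g_1$. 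Consequently the free $L(\g_0)$-module on $\g_1$ collapses onto $\g_1$ with its original $\g_0$-module structure. Thus the degree-$1$ component becomes exactly $\g_1$, and the differential $d$ is unchanged by construction. This identifies the truncation of $E\g$ as the object $(\g_1\xrightarrow{d}\g_0)$ with the original bracket data, which is precisely $\g$ as a Lie algebra in $\LM$.

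Finally I would verify that the induced bracket structure on the truncation agrees with the Lie-algebra-in-$\LM$ structure of $\g$, i.e.\ that the surviving brackets $\g_0\otimes\g_0\to\g_0$ and $\g_1\otimes\g_0\to\g_1$ coincide with $[\,\cdot\,,\,\cdot\,]$; but this is immediate, since these are exactly the brackets that relations \eqref{relations} impose. I would also note that the only brackets $E\g$ carries in degrees $\leq 1$ are of these two types — there is no $\g_1\otimes\g_1$ bracket in degree $\leq 1$ since such a bracket would land in degree $2$ — so the truncation carries no extra data beyond that of $\g$, consistent with the definition of a Lie algebra in $\LM$.

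The main obstacle I expect is the bookkeeping in the second step: one must argue carefully that imposing the relations only in the stated range (both arguments in $\g_0$, or one in $\g_1$ and one in $\g_0$) genuinely forces the full collapse of the free Lie algebra and the free module in degrees $0$ and $1$, and does not secretly impose extra relations among elements of $\g_1$ via the Jacobi/Leibniz identity for $\llbracket\,\cdot\,,\,\cdot\,\rrbracket$. The cleanest way to control this is to exhibit the quotient map $E\g \to \g$ splitting the inclusion of generators and to check, using the universal property of the free differential graded Lie algebra together with the Jacobi identity in degree $\leq 1$, that the map $(\g_1\xrightarrow{d}\g_0)\to E\g$ sending generators to themselves is well defined modulo degree $\geq 2$ and inverse to the truncation of the quotient. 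Verifying compatibility of $d$ with the brackets — that $d$ is a derivation and that $d[x,y]=[dx,y]$ for $x\in\g_1,y\in\g_0$ — is exactly the condition already built into $\g$ being a Lie algebra in $\LM$, so no new constraint arises there.
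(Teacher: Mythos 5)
Your proposal is correct, and since the paper's own proof consists of the single phrase ``The proof is immediate,'' your computation supplies exactly the details that claim presupposes: the degree-$0$ and degree-$1$ parts of the free differential graded Lie algebra are $L(\g_0)$ and (by Lazard elimination) the free $L(\g_0)$-module $\g_1\otimes U(L(\g_0))$, and the relations \eqref{relations} collapse these onto $\g_0$ and $\g_1$ with their given structures.

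One caution about your final paragraph, where you propose to rule out extra collapse by ``exhibiting the quotient map $E\g\to\g$'': no such map of differential graded Lie algebras exists in general, because a DGLA structure on $\g_0\oplus\g_1$ with vanishing components in degrees $\geq 2$ would force, via the graded Leibniz rule applied to $[x,y]=0$ for $x,y\in\g_1$, the identity $[x,dy]=-[y,dx]$, i.e.\ antisymmetry of the associated Leibniz bracket --- which fails in general, and is precisely why $E\g$ is nontrivial in higher degrees. The retraction must therefore be built in $\LM$, after truncation, which is what your qualification ``well defined modulo degree $\geq 2$'' amounts to; to make it airtight, define $\theta_0: L(\g_0)\to\g_0$ as the canonical Lie map and $\theta_1:\g_1\otimes U(L(\g_0))\to\g_1$ as evaluation of the module action through $\theta_0$, and check that $\theta_1$ annihilates the degree-one part of the ideal generated by the relations. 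By the Jacobi identity that ideal is spanned by iterated $\mathrm{ad}$'s applied to the relation elements, so this reduces to two observations: $\theta_1\bigl(\llbracket x,y\rrbracket-[x,y]\bigr)=0$ for $x\in\g_1$, $y\in\g_0$, and $\theta_1\bigl(\llbracket m,w\rrbracket\bigr)=[\theta_1(m),\theta_0(w)]=0$ whenever $m$ has degree one and $w$ lies in the degree-zero part of the ideal (since then $\theta_0(w)=0$). This is exactly the ``no secret relations'' check you flag as the main obstacle; with it, your argument is complete. Note finally that closure of the ideal under $d$ uses $d[x,y]=[dx,y]$, i.e.\ the hypothesis that $d$ is a map of $\g_0$-modules, just as you observe.
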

The proof is immediate.
%\begin{proof}
%The relations which define the first two terms of $E\g$ are, basically, of two kinds: the relations of the first kind are the equalities (\ref{relations}); the second type of relations are the images under $d$ of the Jacobi identity for the bracket $\llbracket\,\cdot\,,\, \cdot\,\rrbracket$ in $E\g_2$. A straightforward calculation shows that the relations of the second type follow from the fact that the action of $\g_0$ on $\g_1$ is a Lie algebra action, and, hence, are automatically satisfied. On the other hand, the relations of the first type mean precisely that $(E\g)_0=\g_0$ and $(E\g)_1=\g_1$.
%\end{proof}
Observe that the functor $\g\mapsto E\g$ from $\LM$ to $\mathrm{DGLA}$ is the left adjoint to the functor of reduction modulo terms of degree 2 and higher.
\subsection{The multiplicative graph algebra}\label{mga}

Let $Q=(G \leftleftarrows A)$ be a multiplicative graph. Write $\f[A]$ for the $\f$-vector space spanned by $A$ and consider the linear map
$$\phi:\f[A]\to \f[G]$$
to the semigroup algebra of $G$ defined by
$$\phi(a) = t(a)-s(a)$$
for all $a\in A$. Then $\bigl(\f[A]\xrightarrow{\phi} \f[G]\bigr)$ is an algebra in the Loday-Pirashvili category $\LM$; the two-sided action of $\f[G]$ on $\f[A]$ is the linear extension of the two-sided action of $G$ on $A$.

Recall that the algebra $\f[G]$ carries a coproduct, which we denote by $\Delta_0$:
$$\Delta_0(g) = g\otimes g$$
for all $g\in G$.
Define the two-sided coaction $\Delta_1$ 
%$$\Delta_1: \f[A] \to \f[A] \otimes \f[G] + \f[G] \otimes \f[A]$$
by setting
$$\Delta_1(a) = a\otimes t(a) + s(a) \otimes a$$
for all $a\in A$. Note that $\phi$ sends $\Delta_1$ to  $\Delta_0$; %in the sense that
%$$\Delta_0\circ\phi = (\phi \otimes \mathrm{Id} + \mathrm{Id}\otimes\phi)\circ \Delta_1.$$
moreover, $\Delta_1$ is a $\f[G]$-bimodule map. Indeed, for $a\in A$ and $g \in G$ we have
$$
\Delta_1 (a\cdot g) = a\cdot g \otimes t(a\cdot g) + s(a\cdot g)\otimes a\cdot g  =  a\cdot g \otimes t(a) g  + s(a) g\otimes a\cdot g= \Delta_1(a)\cdot \Delta_0(g).
$$
Similarly, $\Delta_1 (g\cdot a) =  \Delta_0(g)\cdot \Delta_1 (a)$.
As a consequence, we have
\begin{lem}
$\bigl(\f[A]\xrightarrow{\phi} \f[G]\bigr)$ is a bialgebra in $\LM$.  
\end{lem}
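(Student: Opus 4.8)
The plan is to verify that the data $(\Delta_0,\Delta_1)$ satisfies the three clauses in the definition of a bialgebra in $\LM$ recalled in Section~\ref{LPC}: that $\Delta_0$ makes $\f[G]$ into a bialgebra, that $\Delta_1$ is a genuine two-sided coaction which is an $\f[G]$-bimodule map, and that $\Delta_0\circ\phi = (\phi\otimes\mathrm{Id}+\mathrm{Id}\otimes\phi)\circ\Delta_1$. The last two have essentially been observed in the discussion preceding the statement, where it is checked that $\phi$ carries $\Delta_1$ to $\Delta_0$ and that $\Delta_1$ respects the two-sided $\f[G]$-action. The single fact underlying every remaining verification is that each vertex $g\in G$ is a grouplike element of $(\f[G],\Delta_0)$, that is $\Delta_0(g)=g\otimes g$; in particular this applies to $s(a)$ and $t(a)$ for every arrow $a$.

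First I would check that $\Delta_0$ is a bialgebra structure on the semigroup algebra $\f[G]$. Coassociativity is immediate, since $(\Delta_0\otimes\mathrm{Id})\Delta_0(g) = g\otimes g\otimes g = (\mathrm{Id}\otimes\Delta_0)\Delta_0(g)$, and multiplicativity follows from $\Delta_0(gh)=gh\otimes gh=(g\otimes g)(h\otimes h)=\Delta_0(g)\Delta_0(h)$, while the counit $g\mapsto 1$ is an algebra map. This is the usual (semi)group-algebra bialgebra and needs nothing beyond the definitions.

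Next I would confirm that $\Delta_1$ is a two-sided coaction by splitting it as $\Delta_1=\rho_L+\rho_R$, where $\rho_R(a)=a\otimes t(a)\in \f[A]\otimes\f[G]$ is a right coaction and $\rho_L(a)=s(a)\otimes a\in\f[G]\otimes\f[A]$ is a left coaction. The right-comodule axiom $(\rho_R\otimes\mathrm{Id})\rho_R=(\mathrm{Id}\otimes\Delta_0)\rho_R$ holds because both sides send $a$ to $a\otimes t(a)\otimes t(a)$, using $\Delta_0(t(a))=t(a)\otimes t(a)$; the left-comodule axiom is symmetric, using $\Delta_0(s(a))=s(a)\otimes s(a)$; and the two coactions commute, since $(\rho_L\otimes\mathrm{Id})\rho_R$ and $(\mathrm{Id}\otimes\rho_R)\rho_L$ both send $a$ to $s(a)\otimes a\otimes t(a)$. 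Counitality is likewise clear from $\epsilon(s(a))=\epsilon(t(a))=1$. Together with the bimodule-map property and the $\phi$-compatibility already recorded, this is exactly the assertion that $\Delta=(\Delta_0,\Delta_1)$ is a coassociative comultiplication in $\LM$ and an algebra morphism there.

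The one point demanding care is purely bookkeeping: one must unwind the tensor product of $\LM$, under which $(\f[A]\to\f[G])^{\otimes 3}$ has degree-one part $\f[A]\otimes\f[G]\otimes\f[G]+\f[G]\otimes\f[A]\otimes\f[G]+\f[G]\otimes\f[G]\otimes\f[A]$, and match the components of $(\Delta\otimes\mathrm{Id})\Delta$ and $(\mathrm{Id}\otimes\Delta)\Delta$ against this decomposition. I do not anticipate any genuine obstacle: because $s(a)$ and $t(a)$ are grouplike, every comparison collapses to an identity of tensors of the shape (arrow)$\,\otimes\,$(vertex)$\,\otimes\cdots$, so all the coassociativity and compatibility identities hold identically.
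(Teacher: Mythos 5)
Your proof is correct and takes essentially the same approach as the paper: the paper's own argument is exactly the discussion preceding the lemma (the compatibility $\Delta_0\circ\phi=(\phi\otimes\mathrm{Id}+\mathrm{Id}\otimes\phi)\circ\Delta_1$ and the $\f[G]$-bimodule-map computation for $\Delta_1$), after which the lemma is stated ``as a consequence.'' The only difference is that you spell out what the paper leaves implicit --- the bialgebra axioms for $(\f[G],\Delta_0)$ and the coassociativity of $\Delta_1$ via the splitting into the coactions $\rho_L$ and $\rho_R$, all of which reduce to $s(a)$ and $t(a)$ being group-like --- which is harmless extra detail rather than a different method.
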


When $G$ is group-like, we can define the involution $S_1: \f[A]\to \f[A]$ as
$$a \mapsto -s(a)^{-1}\cdot a\cdot t(a)^{-1}$$
for each $a\in A$.
It maps under $\phi$ to the antipode $S_0: \f[G]\to \f[G]$ that sends each group element $g$ to $g^{-1}$. In order to check that $S_1$ gives rise to an antipode in $\LM$ we need to verify that for any $a\in A$
\begin{equation}\label{antipode}
\mu\circ (S\otimes \mathrm{Id})_1 \circ \Delta_1 = \mu\circ (\mathrm{Id} \otimes S)_1 \circ \Delta_1 = 0,
\end{equation}
Here $\mu$ is the product, $(S\otimes \mathrm{Id})_1$ stands for $$S_1\otimes \mathrm{Id}_{\f[G]} + S_0\otimes \mathrm{Id}_{\f[A]}$$ and $(\mathrm{Id} \otimes S)_1$ for $$ \mathrm{Id}_{\f[A]}\otimes S_0+  \mathrm{Id}_{\f[G]}\otimes S_1,$$ respectively. Substituting these expressions into (\ref{antipode}), we obtain
$$\left(-s(a)^{-1}\cdot a \cdot t(a)^{-1}\right)t(a)+s(a)^{-1}\cdot a = 0$$
and 
$$a\cdot t(a)^{-1} + s(a) \cdot \left(-s(a)^{-1}\cdot a\cdot t(a)^{-1}\right)=0.$$
We have proved
\begin{prop}
When $Q$ is group-like, $\bigl(\f[A]\xrightarrow{\phi} \f[G]\bigr)$ is a Hopf algebra in $\LM$.  
\end{prop}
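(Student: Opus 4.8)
The plan is to promote the bialgebra $\bigl(\f[A]\xrightarrow{\phi}\f[G]\bigr)$ of the preceding lemma to a Hopf algebra by producing an antipode. Recall that a Hopf algebra in $\LM$ is a bialgebra equipped with an antipode $S$, which is itself an endomorphism of the object $\bigl(\f[A]\to\f[G]\bigr)$ in $\LM$; concretely it is a pair $S=(S_1,S_0)$ making the defining square of $\LM$ commute and satisfying the convolution identities. The degree-zero part $S_0$ is forced to be the antipode of the ordinary Hopf algebra $\f[G]$, namely $S_0(g)=g^{-1}$. This is exactly where the group-like hypothesis enters: it is the invertibility of the vertices of $Q$ that makes $S_0$, and then $S_1$, well defined.

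First I would write down the degree-one component $S_1\colon\f[A]\to\f[A]$. Its form is essentially dictated by the requirement that $(S_1,S_0)$ be a morphism in $\LM$, i.e.\ that $\phi\circ S_1=S_0\circ\phi$, together with $\f[G]$-equivariance, which leaves $S_1(a)=-s(a)^{-1}\cdot a\cdot t(a)^{-1}$ as the natural candidate. I would then verify the commuting square directly: since the two-sided action satisfies $s(g\cdot a\cdot h)=g\,s(a)\,h$ and $t(g\cdot a\cdot h)=g\,t(a)\,h$, the arrow $s(a)^{-1}\cdot a\cdot t(a)^{-1}$ has source $t(a)^{-1}$ and target $s(a)^{-1}$, whence $\phi(S_1(a))=t(a)^{-1}-s(a)^{-1}=S_0(\phi(a))$. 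So $(S_1,S_0)$ is a morphism in $\LM$.

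The core of the argument is checking the antipode axiom $(\ref{antipode})$. The one point to get right is the degree-one part of the two maps $S\otimes\mathrm{Id}$ and $\mathrm{Id}\otimes S$: because the tensor square of $\bigl(\f[A]\to\f[G]\bigr)$ has degree-one component $\f[A]\otimes\f[G]+\f[G]\otimes\f[A]$, the map $(S\otimes\mathrm{Id})_1$ acts as $S_1\otimes\mathrm{Id}_{\f[G]}$ on the first summand and as $S_0\otimes\mathrm{Id}_{\f[A]}$ on the second, and symmetrically for $(\mathrm{Id}\otimes S)_1$. Feeding $\Delta_1(a)=a\otimes t(a)+s(a)\otimes a$ through these maps and applying the bimodule multiplication $\mu$ back into $\f[A]$ yields the two cancellations $\bigl(-s(a)^{-1}\cdot a\cdot t(a)^{-1}\bigr)t(a)+s(a)^{-1}\cdot a=0$ and $a\cdot t(a)^{-1}+s(a)\cdot\bigl(-s(a)^{-1}\cdot a\cdot t(a)^{-1}\bigr)=0$, which is precisely $(\ref{antipode})$.

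One conceptual remark worth isolating is why the right-hand side of $(\ref{antipode})$ is $0$ rather than a unit-times-counit term as in the classical antipode axiom: the counit of $\bigl(\f[A]\to\f[G]\bigr)$ annihilates the degree-one part $\f[A]$, so the degree-one component of $u\circ\epsilon$ vanishes identically. After that, the verification is a pure cancellation of group-algebra terms. The main obstacle is therefore not conceptual but bookkeeping: one must keep careful track of which summand of the mixed tensor each of $S_0,S_1$ acts on, and whether $\mu$ applies the left or the right $\f[G]$-action, since a single misplaced inverse or left/right swap would break the cancellation.
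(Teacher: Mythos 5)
Your proposal is correct and takes essentially the same route as the paper: the same antipode $S_1(a) = -s(a)^{-1}\cdot a\cdot t(a)^{-1}$ covering $S_0(g)=g^{-1}$, and the same verification of the identity (\ref{antipode}) via the two cancellations. The extra details you supply --- the check that $\phi\circ S_1 = S_0\circ\phi$ using $s(g\cdot a\cdot h)=g\,s(a)\,h$, $t(g\cdot a\cdot h)=g\,t(a)\,h$, and the observation that the right-hand side of (\ref{antipode}) is $0$ because the counit annihilates $\f[A]$ --- are accurate and simply make explicit what the paper leaves implicit.
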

Note that, in general, this Hopf algebra is not cocommutative.

\subsection{The augmentation filtration and the associated Hopf algebra}
Write $I(G)$ for the augmentation ideal in $\f[G]$ and let $I^n (A) \subset \f[A]$ be the subspace %spanned by the elements of the form 
$$\langle v_1\cdot a\cdot v_2\, |\, a\in A,\, v_1\in I^k(G),\, v_2\in I^{n-k}(G),\, k\leq n\rangle.$$ 
The image of $I^n (A)$ under $\phi$ lies in $I^{n+1}(G)$.
\begin{lem}\label{connected}
If $Q$ is group-like and path-connected, $\phi(I^n (A)) = I^{n+1}(G)$. 
\end{lem}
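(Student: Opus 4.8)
The plan is to prove the two inclusions separately. The inclusion $\phi(I^n(A))\subseteq I^{n+1}(G)$ is the one already noted just before the lemma, so all the work lies in showing $I^{n+1}(G)\subseteq\phi(I^n(A))$. My first move would be to reduce the general case to the case $n=0$, i.e.\ to the identity $\phi(\f[A])=I(G)$. The map $\phi$ is a bimodule map (this is part of the statement that $(\f[A]\xrightarrow{\phi}\f[G])$ is an algebra in $\LM$, and is in any case immediate from $\phi(a)=t(a)-s(a)$ together with $t(a\cdot g)=t(a)g$ and $s(g\cdot a)=g\,s(a)$). Hence, for $v\in\f[G]$ and $\xi\in\f[A]$ one has $\phi(v\cdot\xi)=v\cdot\phi(\xi)$, so, writing $I^{n+1}(G)=I^{n}(G)\cdot I(G)$ and assuming the case $n=0$, I would compute
\[
I^{n+1}(G)=I^{n}(G)\cdot\phi(\f[A])=\phi\bigl(I^{n}(G)\cdot\f[A]\bigr)\subseteq\phi\bigl(I^{n}(A)\bigr),
\]
where the last inclusion holds because every generator $v\cdot a$ with $v\in I^{n}(G)$ and $a\in A$ equals $v\cdot a\cdot 1$ with $1\in I^{0}(G)=\f[G]$, and so lies in $I^{n}(A)$ (take $k=n$). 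Together with the reverse inclusion this gives the desired equality for all $n$.

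The heart of the matter is therefore the case $n=0$, namely $\phi(\f[A])=I(G)$, and this is the only point at which path-connectedness enters. The inclusion $\phi(\f[A])\subseteq I(G)$ is automatic, since the augmentation of $\phi(a)=t(a)-s(a)$ is $1-1=0$. For the reverse inclusion I would use that $I(G)$ is spanned by the elements $g-1$, $g\in G$, and show each of these lies in $\phi(\f[A])$. Because $Q$ is path-connected, every vertex $g$ is joined to the unit by a path, that is, by arrows $a_1,\dots,a_m$ and vertices $1=v_0,v_1,\dots,v_m=g$ with $\{s(a_i),t(a_i)\}=\{v_{i-1},v_i\}$ for each $i$. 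Telescoping then gives
\[
g-1=\sum_{i=1}^{m}(v_i-v_{i-1})=\sum_{i=1}^{m}\epsilon_i\,\phi(a_i),
\]
where $\epsilon_i=+1$ if $a_i$ runs from $v_{i-1}$ to $v_i$ and $\epsilon_i=-1$ otherwise. Hence $g-1=\phi\bigl(\sum_i\epsilon_i a_i\bigr)\in\phi(\f[A])$, and $I(G)=\phi(\f[A])$ follows.

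The only genuine obstacle is this base case, and specifically the appearance of path-connectedness in it: without that hypothesis, the preceding Proposition shows that $\phi(\f[A])$ is the augmentation ideal of the subgroup $H\subseteq G$ generated by the image of $X$ (the vertex group of the component of the unit), so the asserted equality would fail and one would instead obtain $\phi(I^n(A))=I(H)\cdot I^{n}(G)$ or similar. Everything beyond the base case is formal: the reduction uses only that $\phi$ is a bimodule map and the elementary identity $I^{n+1}(G)=I^{n}(G)\cdot I(G)$. In particular the argument is intrinsic to the graph structure and does not require the explicit description $A=G\times X$; the telescoping identity along an arbitrary path is what carries the whole proof.
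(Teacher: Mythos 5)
Your proof is correct and takes essentially the same route as the paper: reduce to the base case $\phi(\f[A])=I(G)$, then establish that case by telescoping along an undirected path from $1$ to $g$, with signs $\pm 1$ according to the orientation of each arrow. The only difference is that you spell out the reduction to $n=0$ via the bimodule property of $\phi$, a step the paper compresses into the single phrase ``it is sufficient to show that $\phi(A)=I(G)$.''
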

\begin{proof} It is sufficient to show that $\phi(A)=I(G)$. 

The augmentation ideal $I(G)$ is additively generated by the elements of the form $g-1$ with $g\in G$. Since $Q$ is path-connected, for any $g\in G$ there is a path $a_1, \ldots, a_n$, with $a_i\in A$ connecting $1$ and $g$, although not necessarily according to the directions of the $a_i$. Let $e_i=1$ if $a_i$ is directed along the path and $e_i=-1$ otherwise. Then 
$$\phi\left(\sum e_i a_i\right)=g-1,$$
which shows that $\phi(A)$ coincides with $I(G)$.
\end{proof}
\begin{rem}
For a not necessarily path-connected $Q$, the vertices of the connected component that contains the unit in $G$ form a normal subgroup $G_0$. Let $I(G,G_0)$ be the kernel of the homomorphism $\f[G]\to \f[G/G_0]$ induced by the quotient map. Then $\phi(I^n (A))$ can be explicitly identified as $I^{n+1}(G,G_0)$. 
\end{rem}

From now on we shall assume that $Q$ is group-like and path-connected.

The maps $\Delta_1$ and $S_1$ respect the filtration by the $I^n(A)$ so there is a graded Hopf algebra $(\D(Q)\xrightarrow{\phi_*} \D(G))$ associated with it. 
The map $\phi_*$, induced by $\phi$, raises the degree by one.
\begin{lem}
$\bigl(\D(Q)\xrightarrow{\phi_*}\D(G)\bigr)$ is an irreducible cocommutative Hopf algebra. 
\end{lem}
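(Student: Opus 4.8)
The plan is to establish the two outstanding adjectives by reducing everything to the degree-zero generators of $\D(Q)$ and leaning on the classical structure of $\D(G)$. Since it was already observed that $\Delta_1$ and $S_1$ respect the augmentation filtration, the pair $(\D(Q)\xrightarrow{\phi_*}\D(G))$ is a graded Hopf algebra in $\LM$; it remains to prove that it is cocommutative and irreducible. First I would recall that the component $\D(G)=\mathrm{gr}_{I(G)}\f[G]$ is the associated graded Hopf algebra of a group ring, and, as $\f$ has characteristic zero, a theorem of Quillen identifies it with the universal enveloping algebra of the Lie algebra $\bigoplus_n(\gamma_nG/\gamma_{n+1}G)\otimes\f$ of the lower central series. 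In particular $\D(G)$ is connected ($\D(G)_0=\f$), cocommutative, primitively generated and irreducible. This disposes of the $\H$-component, so the content of the lemma is the $\A$-component $\D(Q)=\mathrm{gr}\,\f[A]$ together with the induced two-sided coaction $\Delta_1^{\mathrm{gr}}$.

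Next I would analyse $\Delta_1^{\mathrm{gr}}$ on generators. Writing $\bar g=g-1$, for an arrow $a\in A=I^0(A)$ we have $\Delta_1(a)=a\otimes 1+1\otimes a+a\otimes\overline{t(a)}+\overline{s(a)}\otimes a$, and since $\overline{t(a)},\overline{s(a)}\in I^1(G)$ the last two summands have filtration degree one; hence the class $[a]\in\D(Q)_0$ satisfies $\Delta_1^{\mathrm{gr}}([a])=[a]\otimes 1+1\otimes[a]$, that is, it is primitive. Using $A=G\times X$ together with the identity $I^n(A)=\sum_{k=0}^nI^k(G)\cdot\f[A]\cdot I^{n-k}(G)$, I would verify that $I^k(G)\cdot I^1(A)\cdot I^{n-k}(G)\subseteq I^{n+1}(A)$, which shows that modulo $I^{n+1}(A)$ every element of $\D(Q)_n$ is a $\D(G)$-bimodule combination of the classes $[a]$. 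Thus $\D(Q)$ is generated as a $\D(G)$-bimodule by the primitive degree-zero classes.

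Cocommutativity then propagates from these generators. The element $[a]\otimes 1+1\otimes[a]$ is visibly fixed by the symmetry $\sigma$ of $\LM$, which, in the description of $\LM$ as $1$-jets of chain complexes, interchanges $\A\otimes\H$ and $\H\otimes\A$ with Koszul sign $(-1)^{1\cdot 0}=1$. Since $\Delta_1^{\mathrm{gr}}$ is a $\D(G)$-bimodule map intertwined with the cocommutative coproduct $\Delta_0^{\mathrm{gr}}$, and since $\sigma$ is a symmetric monoidal transformation compatible with the bimodule structure (here one uses the cocommutativity of $\D(G)$ so that $\sigma$ respects the diagonal action), both $\sigma\circ\Delta_1^{\mathrm{gr}}$ and $\Delta_1^{\mathrm{gr}}$ are bimodule maps agreeing on the generating set $\{[a]\}$; hence they coincide everywhere and the coalgebra in $\LM$ is cocommutative.

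For irreducibility I would argue by the grading. Both components are non-negatively graded, $\D(G)_0=\f$, and the reduced coaction takes values in $\D(Q)\otimes\D(G)_{>0}+\D(G)_{>0}\otimes\D(Q)$; iterating it preserves total degree while adjoining a positive-degree $\D(G)$-factor each time, so it is conilpotent. Together with the irreducibility of $\D(G)$ this identifies the coradical of $(\D(Q)\to\D(G))$ with the trivial (unit) subcoalgebra, which is exactly irreducibility; equivalently, connectedness and cocommutativity let one invoke the Milnor--Moore theorem in $\LM$ to present the Hopf algebra as the enveloping algebra of its primitives. I expect the main obstacle to be the bookkeeping of $\mathrm{gr}\,\f[A]$ as a $\D(G)$-bimodule — in particular the generation-in-degree-zero statement and the precise meaning of cocommutativity and irreducibility in $\LM$ (the Koszul signs in $\sigma$ and the conilpotency of the coaction) — whereas the only nontrivial external input, Quillen's description of $\D(G)$, is used as a black box.
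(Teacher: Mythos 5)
Your proposal is correct, but it is organized differently from the paper's proof, even though both ultimately rest on the same filtration bookkeeping. The paper splits the coaction globally as $\Delta_1(a)=\bigl(a\otimes s(a)+s(a)\otimes a\bigr)+a\otimes\phi(a)$, observes that the first summand is invariant under the symmetry on the nose, and then shows that the defect $\Delta_1'\colon a\mapsto a\otimes\phi(a)$ raises the filtration index by one on \emph{all} of $\f[A]$ (using the equivariance formulas $\Delta_1'((g-1)\cdot u)=\bigl((g-1)\otimes(g-1)+1\otimes(g-1)+(g-1)\otimes1\bigr)\cdot\Delta_1'(u)$ and $\Delta_1'(a)\in\f[A]\otimes I(G)$), so that $\Delta_1'$ induces zero on $\D(Q)$ and cocommutativity follows at once, with no generation argument. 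You instead check the filtration jump only on the generators $a\in A$ (concluding that the degree-zero classes $[a]$ are primitive), and then propagate cocommutativity by a separate structural step: $\D(Q)$ is generated as a $\D(G)$-bimodule in degree zero, and $\Delta_1^{\mathrm{gr}}$ and $\sigma\circ\Delta_1^{\mathrm{gr}}$ are bimodule maps agreeing on those generators. Your route buys a cleaner division of labor (filtration computation only on generators, the rest abstract), but it requires two inputs the paper's argument does not need at this stage: the cocommutativity of $\D(G)$ (Quillen, used as a black box, and needed so that $\sigma$ intertwines the diagonal bimodule structure on the target) and the bimodule-generation statement, which, to be fair, is immediate from the definition of $I^n(A)$. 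For irreducibility both arguments reduce to the irreducibility of $\D(G)$; your conilpotency/coradical discussion fleshes out what the paper dispatches in one line. All the individual steps you outline are sound, so this is a valid alternative proof rather than a gap.
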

\begin{proof}
We have
$$\Delta_1(a)= a\otimes s(a) + s(a)\otimes a + a\otimes\phi(a).$$
The ``non-cocommutative part'' of $\Delta_1$ 
$$\Delta_1': a\mapsto a\otimes\phi(a)$$
vanishes on the associated graded level since $\phi_*$ raises the degree by one. Indeed, for any $u\in \f[A]$ and any $g\in G$ we have 
$$\Delta_1'(g\cdot u) = (g\otimes g)\cdot \Delta_1'(u),$$
and hence, 
$$\Delta_1'((g-1)\cdot u) = \bigl( (g-1)\otimes (g-1) +1\otimes(g-1) + (g-1)\otimes 1\bigr)\cdot \Delta_1'(u).$$
The same kind of equality holds for $\Delta_1'(u\cdot (g-1))$. Moreover, $\Delta'_1(a)\in A\otimes I(G)$ for any $a\in A$. These formulae show that $\Delta_1'$ increases the filtration index by one and, therefore, induces the zero map on $\D(Q)$. This implies that the coproduct in the bialgebra $(\D(Q)\to \D(G))$ is cocommutative.
The irreducibilty follows from the irreducibilty of $ \D(G)$.
\end{proof}
Since $\D(G)$ satisfies the conditions of the Milnor-Moore Theorem, $(\D(Q)\xrightarrow{\phi_*}\D(G))$ also does (see \cite{LP}); therefore, it is the universal enveloping algebra of a certain Lie algebra $(M\to \g)$  in $\LM$:
$$\bigl(\D(Q)\xrightarrow{\phi_*}\D(G)\bigr)\simeq (U(\g)\otimes M \to U(\g)).$$
The Lie algebra $\g=\Prim{\D(G)}$ is the graded Lie algebra of the successive quotients of the lower central series of $G$, tensored with $\f$. As for $M$, it can be understood in terms of the \emph{graded coinvariant module} of the augmented rack corresponding to $Q$.

\subsection{The coinvariant module of an augmented rack}\label{coinvariant}

Let $\pi: X\to G$ be an augmented rack. The vector space $\f[X]$ spanned by $X$ has a decreasing filtration by the subspaces
$$I^n(X) = \langle x^{(g_1-1)\ldots (g_n-1)}\,|\, x\in X, g_i\in G\,\rangle,$$
where  we use the exponential notation for the linear extension of the action of $G$ to an action of $\f[G]$ on $\f[X]$. We should warn that this notation might be not entirely intuitive; for instance, $x^{(g-1)} = x^g - x$. However, we want to keep clear the distinction between the $G$-action on $X$ and the two $G$-actions in a group-like graph. When $n=0$, we set $I^0(X)=\f[X]$.

Set, for $n>0$
$$\P^{n} (X) : = I^n(X) / I^{n+1}(X);$$
in other words, the space $\P^{n} (X)$ consists of the coinvariants of the $G$-action on ${I^n(X)}$.  In particular, $\P^0(X)=\f [X/G]$ is the vector space spanned by the orbits of the action of $G$ on $X$.  Write $\P(X)$ for the graded vector space whose part of degree $k$ is $\P^k(X)$. 

\medskip

It is clear from the definition that the space $\P(X)$ is a graded module over $\D(G)$; we call it the \emph{coinvariant module}. Moreover, the map $\pi$ induces a degree 1 map of graded $\D(G)$-modules $\pi_*:\P(X)\to\D(G)$: 
$$\bigl( x^{(g_1 -1)\ldots (g_n-1)}\,\mathrm{mod}\ I^{n+1}(X)\bigr)\ \mapsto\ \bigl((\pi(x)-1)^{(g_1 -1)\ldots (g_n-1)}\,\mathrm{mod}\ I^{n+2}(G)\bigr).$$
%which sends the class of $x^{(g_1 -1)\ldots (g_n-1)}$ to the class of $(\pi(x)-1)^{(g_1 -1)\ldots (g_n-1)}$. 
Indeed, 
for $u\in I^k(G)$ and $g\in G$ we have 
$$u^g-u = g^{-1}(ug-gu)= g^{-1}(u(g-1)-(g-1)u)\in I^{k+1}(G)$$ so that $\pi_*$ is well-defined. The image of $\pi_*$ in $\D(G)$ lies in the Lie algebra $\Prim{\D(G)}$ of the primitive elements of $\D(G)$: $\pi_*\P^0(X)$ consists of elements of degree one in $\D(X)$, which are primitive, and $\pi_*\P^n(X)$ is spanned by the commutators with $\pi_*\P^{n-1}(X)$. This shows that $(\P(X) \xrightarrow{\pi_*} \Prim{\D(G)})$ is a graded Lie algebra in $\LM$.

\begin{prop}
The Hopf algebra $\bigl(\D(Q)\xrightarrow{\phi_*}\D(G)\bigr)$ is the universal enveloping algebra of the Lie algebra $\bigl(\P(X) \xrightarrow{\pi_*}  \Prim{\D(G)}\bigr)$, where $X$ is the augmented rack corresponding to the group-like graph $Q$.
\end{prop}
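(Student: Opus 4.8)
The plan is to reduce the proposition to an isomorphism of Lie algebras in $\LM$. We already know that $\bigl(\D(Q)\xrightarrow{\phi_*}\D(G)\bigr)$ is irreducible and cocommutative, so by the Milnor--Moore theorem in $\LM$ it is the universal enveloping algebra $U(M\to\g)=\bigl(U(\g)\otimes M\to U(\g)\bigr)$, where $\g=\Prim\D(G)$, the equality $\D(G)=U(\g)$ is the classical statement for the lower central series filtration, and $M=\Prim\D(Q)$ is the module of primitives of the $\LM$-coalgebra $\D(Q)$. Since $U$ is a functor, it suffices to construct an isomorphism of Lie algebras in $\LM$ between $(M\to\g)$ and $\bigl(\P(X)\xrightarrow{\pi_*}\g\bigr)$. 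The comparison map originates from the inclusion $\iota\colon\f[X]\hookrightarrow\f[A]$, $x\mapsto(1,x)$; recall that $A=G\times X$, that $\f[A]$ is free as a left $\f[G]$-module on $X$, and that every arrow is $a=(g,x)=g\cdot x$.

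The technical core is a comparison of filtrations. Starting from $x\cdot g=g\cdot x^{g}$ one obtains in $\f[A]$ the commutation relation
\[
x\cdot(g-1)=(g-1)\cdot x^{g}+x^{(g-1)},\qquad x^{(g-1)}=x^{g}-x\in\f[X],
\]
for $x\in X$, $g\in G$. The claim I would prove is that, under the left-module identification $\f[A]\cong\f[G]\otimes\f[X]$, the augmentation filtration agrees with the tensor filtration:
\[
I^{n}(A)=\sum_{p+q=n}I^{p}(G)\cdot I^{q}(X).
\]
I expect this to be the main obstacle. For $\supseteq$ one first shows $\iota\bigl(I^{q}(X)\bigr)\subseteq I^{q}(A)$ by induction on $q$, writing $y^{(g-1)}=y\cdot(g-1)-(g-1)\cdot y^{g}$ and using that each $I^{q}(X)$ is $G$-stable; then $I^{p}(G)\cdot I^{q}(A)\subseteq I^{p+q}(A)$ is immediate from the definition of $I^{\bullet}(A)$. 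For $\subseteq$ one proves the estimate $x\cdot I^{m}(G)\subseteq\sum_{p+q=m}I^{p}(G)\cdot I^{q}(X)$ by induction on $m$, pushing each right multiplication by $(g-1)$ through the displayed relation; a generator $v_{1}\cdot a\cdot v_{2}=v_{1}g_{0}\cdot(x\cdot v_{2})$ of $I^{n}(A)$ then lands in the right-hand side.

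Granting the claim and using that over a field of characteristic zero the associated graded of a tensor filtration is the tensor product of the associated graded spaces, the filtered inclusion $\iota$ induces an isomorphism of graded left $\D(G)$-modules
\[
\alpha\colon\D(G)\otimes\P(X)\xrightarrow{\ \sim\ }\D(Q),\qquad u\otimes p\mapsto u\cdot\bar\iota(p),
\]
where $\bar\iota\colon\P(X)=\mathrm{gr}\,\f[X]\to\mathrm{gr}\,\f[A]=\D(Q)$ is the induced map on associated graded spaces. Since every element of $\f[X]$ has source $1$, for $a\in\f[X]$ the coaction reads $\Delta_{1}(a)=a\otimes 1+1\otimes a+\Delta_1'(a)$; as $\Delta_1'$ vanishes on $\D(Q)$, each $\bar\iota(p)$ is primitive, so $\bar\iota(\P(X))\subseteq M$. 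Comparing $\alpha$ with the Milnor--Moore isomorphism $\D(G)\otimes M\xrightarrow{\sim}\D(Q)$ then forces $\bar\iota\colon\P(X)\to M$ to be an isomorphism of graded vector spaces.

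It remains to see that $\bar\iota$ is a morphism of Lie algebras in $\LM$. Compatibility with the maps to $\g$ is the identity $\phi\bigl(x^{(g_{1}-1)\cdots(g_{n}-1)}\bigr)=(\pi(x)-1)^{(g_{1}-1)\cdots(g_{n}-1)}$, a direct computation from $\pi(x^{g})=g^{-1}\pi(x)g$, which gives $\phi_{*}\circ\bar\iota=\pi_{*}$. For the module structure, reducing
\[
x\cdot(g-1)-(g-1)\cdot x=(g-1)\cdot x^{(g-1)}+x^{(g-1)}
\]
modulo $I^{2}(A)$ shows that the adjoint (right-minus-left) action of $\overline{g-1}$ on $\bar\iota([x])$ in $\D(Q)$ equals $\bar\iota$ of the $\D(G)$-action of $\overline{g-1}$ on $[x]$ in $\P(X)$; hence $\bar\iota$ intertwines the $\g$-actions, and therefore the induced Leibniz brackets. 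Thus $\bigl(\P(X)\xrightarrow{\pi_*}\g\bigr)\cong(M\to\g)$ as Lie algebras in $\LM$, and applying $U$ yields the proposition.
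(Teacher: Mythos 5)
Your argument is correct, but it is packaged differently from the paper's. Both proofs rest on the same technical core: the left-module identification $\f[A]\cong\f[G]\otimes\f[X]$ via $(g,x)\mapsto g\otimes x$, the resulting identity $I^n(A)=\sum_{p+q=n}I^p(G)\otimes I^q(X)$, and the commutation relation $\iota(x)\cdot(g-1)=(g-1)\cdot\iota(x^g)+\iota(x^{(g-1)})$ read modulo higher filtration. The difference is what is done with these facts. The paper never constructs your map $\bar\iota$ and never identifies the primitives of $\D(Q)$: it writes out, following Loday--Pirashvili, the explicit model of the universal enveloping algebra of $\bigl(\P(X)\to\Prim\D(G)\bigr)$ as $\bigl(\D(G)\otimes\P(X)\xrightarrow{\mu(\mathrm{Id}\otimes\pi_*)}\D(G)\bigr)$ with the bimodule structure (\ref{left})--(\ref{right}), and then checks directly that under the identification above $\phi_*$ becomes $\mu(\mathrm{Id}\otimes\pi_*)$ and the $\D(G)$-bimodule structure of $\D(Q)$ becomes exactly (\ref{left})--(\ref{right}); the commutation computation modulo higher terms is exactly what matches the right action. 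You instead invoke Milnor--Moore to write $\bigl(\D(Q)\to\D(G)\bigr)=U\bigl(\Prim\D(Q)\to\g\bigr)$ and reduce everything to an isomorphism of Lie algebras in $\LM$ between $\P(X)$ and $\Prim\D(Q)$; this costs three extra steps (primitivity of $\bar\iota(\P(X))$, the freeness comparison forcing bijectivity, and the verification of the Lie-morphism axioms), but it buys an explicit identification $\Prim\D(Q)\cong\P(X)$, which the paper leaves implicit, and it makes you actually prove the filtration identity that the paper only asserts by exhibiting a formula on generators. Both routes consume the same inputs: the Loday--Pirashvili description of enveloping algebras in $\LM$ and the paper's preceding lemma on the vanishing of $\Delta_1'$ at the graded level.

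Two places could be tightened. First, ``comparing $\alpha$ with the Milnor--Moore isomorphism forces $\bar\iota$ to be an isomorphism'' deserves a line: reduce both free-module isomorphisms $\D(G)\otimes\P(X)\to\D(Q)$ and $\D(G)\otimes M\to\D(Q)$ modulo $\D(G)^{+}\cdot\D(Q)$, where $\D(G)^{+}$ is the part of positive degree; both quotient maps are isomorphisms onto $\D(Q)/\D(G)^{+}\D(Q)$, and $\bar\iota$ is their composite's comparison, hence bijective. Second, your intertwining computation is displayed only for classes of $x\in X$; since one cannot bootstrap the $\g$-equivariance from degree $0$ alone (that would presuppose the module-map property being proved), you should note that the same relation, applied by linearity to arbitrary $Y\in I^n(X)$ and read modulo $I^{n+2}(A)$, gives the intertwining in every degree, and that elements of degree one generate $\g$, so checking them suffices.
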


\begin{proof}
The universal enveloping algebra of $\bigl(\P(X) \xrightarrow{\pi_*}  \Prim{\D(G)}\bigr)$ is the map 
$$\bigl(\D(G) \otimes \P(X)\xrightarrow{\mu(\mathrm{Id}\otimes \pi_*)}\D(G)\bigr),$$ 
where $\mu$ is the product in $\D(G)$, with the following $\D(G)$-bimodule structure on $\D(G) \otimes \P(X)$:
\begin{equation}\label{left}w_1\cdot (w_2\otimes m) = (w_1 w_2) \otimes m,\end{equation}
\begin{equation}\label{right}(w\otimes m)\cdot a  = (w a)\otimes m + w \otimes m^a\end{equation}
for all $w_1, w_2, w\in\D(G)$, $m\in \P(X)$ and $a\in \Prim{\D(G)}$ (see \cite[page 271]{LP}).

\medskip

Identify the vector space $\f[A]$ with $\f[G]\otimes\f[X]$ via $(g,x)\mapsto g\otimes x$. Under this identification, $I^n(A)$ is sent to 
$$\sum_{p+q=n} I^p(G)\otimes {I^{q}(X)},$$
namely,
$$ (g_1-1)\ldots(g_p-1) \cdot (g,x)^{(h_1-1)\ldots(h_q-1)}\ \mapsto\ (g_1-1)\ldots(g_p-1) g\, \otimes\, x^{(h_1-1)\ldots(h_q-1)}.$$ 
Therefore, as a graded vector space, $\D(Q)$ is isomorphic to  $\D(G) \otimes \P(X)$. Under this identification, the map $\phi_*$ coincides with $\mu(\mathrm{Id}\otimes\pi_*)$. Also, the  $\D(G)$-bimodule structure is the same as that of the universal enveloping algebra of $(\P(X) \to \Prim{\D(G)})$. This is clear for the left module structure (\ref{left}). As for the right module structure, the action on the right by $a\in \g$ can be represented by a right action of $h-1$ with $h\in G$:
$$(u\otimes x)^{h-1} = (u\otimes x)^h - u\otimes x = u(h-1)\otimes x^h+ u\otimes x^{h-1},$$
with $u\in I^m(A)$ and $x\in I^n(X)$.  It is, actually, sufficient to consider $a\in\Prim{\D(G)}$ of degree one, since elements of this kind generate $\D(G)$; that is, consider the above equality modulo 
$I^{m+n+2}$. Then, the right-hand side of the above formula is equivalent to $$u(h-1)\otimes x+ u\otimes x^{h-1},$$ which in $\D(Q)$ translates precisely into (\ref{right}).
\end{proof}

\subsection{Edge-like elements in Hopf algebras and the Malcev completion}\label{edgelike}
Let $(\A\xrightarrow{} \H)$ be a Hopf algebra in $\LM$.
%; denote by $\Delta_1$ the coproduct $$A\to A\otimes H + H\otimes A.$$ 
Call an element $a\in \A$ \emph{edge-like} if
$$\Delta_1(a) = a\otimes t(a) + s(a)\otimes a,$$
%and
%$$\phi(a)=t(a)-s(a),$$
where $s(a)$ and $t(a)$ are group-like elements of $\H$. Let us denote the set of group-like elements of $\H$ by $\mathcal{G}_0(\H)$ and the set of edge-like elements of $\A$ by $\mathcal{G}_1(\A)$. Assigning to an edge-like element $a$ the corresponding group-like elements $s(a)$ and $t(a)$ we define two maps 
$$s,t: \mathcal{G}_1(\A) \to \mathcal{G}_0(\H).$$ 
The two-sided action of $\H$ on $\A$ restricts to the action of the group $\mathcal{G}_0(\H)$ on  $\mathcal{G}_1(\A)$. We have:
\begin{lem}
The pair $(\mathcal{G}_0(\H)\leftleftarrows \mathcal{G}_1(\A))$ is a group-like graph.
\end{lem}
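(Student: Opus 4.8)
The plan is to verify directly that the pair $(\mathcal{G}_0(\H)\leftleftarrows \mathcal{G}_1(\A))$ satisfies all the structure maps of a group-like graph: that $\mathcal{G}_0(\H)$ is a group under the product of $\H$, that the source and target maps $s,t$ are well-defined, that the restricted bimodule action of $\mathcal{G}_0(\H)$ on $\mathcal{G}_1(\A)$ lands in $\mathcal{G}_1(\A)$ and is compatible with $s$ and $t$, and finally that this action assembles into an associative morphism $\mu$ from the Cartesian square of the graph to itself. By Theorem~\ref{main} it then suffices to identify the resulting data with an augmented rack; but the most transparent route is to check the multiplicative-graph axioms directly and invoke the established equivalence only if convenient.

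First I would recall the standard fact that the group-like elements $\mathcal{G}_0(\H)$ of a Hopf algebra form a group: they are closed under the product (since $\Delta_0$ is an algebra map, $\Delta_0(gh)=gh\otimes gh$), contain the unit, and each $g$ is invertible with inverse $S_0(g)$. This handles the vertices. Next I would show that the two-sided action preserves edge-likeness. Given an edge-like $a$ with $\Delta_1(a)=a\otimes t(a)+s(a)\otimes a$ and a group-like $g$, I would compute $\Delta_1(a\cdot g)$ using the fact that $\Delta_1$ is an $\H$-bimodule map, exactly as in the computation already displayed in Section~\ref{mga}:
\begin{equation*}
\Delta_1(a\cdot g) = \Delta_1(a)\cdot\Delta_0(g) = (a\cdot g)\otimes (t(a)\,g) + (s(a)\,g)\otimes(a\cdot g).
\end{equation*}
Since $t(a)g$ and $s(a)g$ are group-like, this exhibits $a\cdot g$ as edge-like with $s(a\cdot g)=s(a)\,g$ and $t(a\cdot g)=t(a)\,g$; the symmetric computation gives $s(g\cdot a)=g\,s(a)$ and $t(g\cdot a)=g\,t(a)$. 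These formulas simultaneously prove closure of the action and the compatibility of $s,t$ with it, so that $s$ and $t$ are graph morphisms.

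The remaining point is to promote these two commuting one-sided actions into a single associative multiplication $\mu: Q\,\square\, Q\to Q$ on the graph $Q=(\mathcal{G}_0(\H)\leftleftarrows\mathcal{G}_1(\A))$. Here I would unwind the definition of the Cartesian product: an arrow of $Q\,\square\, Q$ is either a pair (edge, vertex) or a pair (vertex, edge), and I would send $(a,g)\mapsto a\cdot g$ and $(g,a)\mapsto g\cdot a$, while on vertices $\mu$ is the group product $(g,h)\mapsto gh$. That this respects source and target is precisely the content of the four formulas above, and associativity $\mu\circ(\mu\,\square\,\mathrm{id})=\mu\circ(\mathrm{id}\,\square\,\mu)$ follows from the associativity of the $\H$-bimodule structure on $\A$ together with associativity in $\H$. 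Since the vertex semigroup $\mathcal{G}_0(\H)$ is a group, $Q$ is group-like.

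The main obstacle I anticipate is purely bookkeeping rather than conceptual: one must check that the assignment $(a,g)\mapsto a\cdot g$, $(g,a)\mapsto g\cdot a$ is genuinely a \emph{morphism of graphs} out of the Cartesian product, i.e.\ that it is consistent on the shared vertex set and respects the face structure, and that the two actions commute as part of verifying associativity. The edge-likeness condition is what makes $s$ and $t$ transform correctly under the action, so the crux is simply to confirm that no further relations are needed beyond the bimodule axioms for $\Delta_1$ and the group-like structure of $\mathcal{G}_0(\H)$; all of these are immediate once the displayed transformation rules for $s(a\cdot g)$, $t(a\cdot g)$, $s(g\cdot a)$, $t(g\cdot a)$ are in hand.
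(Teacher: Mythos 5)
Your proof is correct and follows essentially the route the paper intends: the paper states this lemma without proof, having just asserted that the two-sided action of $\H$ on $\A$ restricts to an action of $\mathcal{G}_0(\H)$ on $\mathcal{G}_1(\A)$, and your verification of that claim via the bimodule property of $\Delta_1$ is the same computation the paper displays when proving that $\bigl(\f[A]\to\f[G]\bigr)$ is a bialgebra in $\LM$. The remaining assembly of the two one-sided actions into an associative morphism $\mu$ on the Cartesian square is exactly the routine bookkeeping the paper leaves implicit, so there is no gap.
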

Consider the group-like graph Hopf algebra $\phi: \f[A]\to \f[G]$. Since $\phi$ maps $I^{n}(A)$ to  $I^{n+1}(G)$, it descends to a map
$$\phi_n: \f[A]/I^n(A)\to \f[G]/I^{n+1}(G),$$
which is also a Hopf algebra in $\LM$. 
There is a canonical morphism of $\phi_{n+1}$ onto $\phi_n$ for each $n$; the inverse limit of the $\phi_i$ in $\LM$ is a \emph{complete} Hopf algebra
$$\phi\, { \hat{ }}: \f[A]\, \hat{ }\to \f[G]\, \hat{ },$$
whose edge-like elements form a group-like graph
$$\mathcal{G}_0(\f[G]\, \hat{ }\,)\leftleftarrows \mathcal{G}_1(\f[A]\, \hat{ }\,)$$
that we call the \emph{Malcev completion} of $(G\leftleftarrows A)$. 
Here, a complete Hopf algebra in $\LM$ is defined as usual: the tensor products in the definition of the comultiplication should be replaced by the completed tensor products. The definition of edge-like elements in a complete Hopf algebra also uses completed tensor products instead of the usual tensor products.

%\medskip

%Let us illustrate this definition on the basic example of the graph of pairs of elements in a group $G$. Here, $$\f[A] = \f[G\times G] = \f[G]\otimes \f[G]$$ and  $\f[A]\, \hat{ }$ is isomorphic to  $\bigl(\f[G]\, \hat{ }\, \bigr)\, \widehat{\otimes}\, \bigl(\f[G]\, \hat{ }\,\bigr)$.

%$$\ldots$$ 

\section{Linear Lie graphs and linear augmented racks}

\subsection{Lie theory}\label{LLG} One can consider multiplicative graphs such that both $G$ and $A$ are smooth manifolds and $s$ and $t$ are submersions onto their image\footnote{This should be compared with the definition of a Lie groupoid.}. Our principal motivation here is to show how the Lie theory for multiplicative graphs is related to the Lie algebras in $\LM$;  for these purposes it is sufficient to consider a narrower class of graphs.
\begin{defin} A group-like graph $(G\leftleftarrows A)$ over a Lie group $G$ is called a \emph{linear Lie  graph} if the source and target maps are vector bundles over their image, the two-sided action  of $G$ on $A$ is linear on the fibres, and the fibres of $s$ and $t$ over $1\in G$ have the same origin $e\in A$. 
\end{defin}
\begin{defin}
An augmented rack $\pi: X\to G$ is called a  \emph{linear augmented Lie rack} if $X$ is a vector space, $G$ is a Lie group, $\pi$ is smooth with $\pi(0)=1$ and the action of $G$ on $X$ is linear. 
\end{defin}

A linear Lie graph clearly gives rise to a linear augmented Lie rack. The converse is also true. Indeed, the source map $G\times X\to G$ is simply the projection onto the first factor. The target map sends $(g,x)$ to $g\pi(x)$; its fibre over $h\in G$ is the subspace $$\{ (h\pi(x)^{-1}, x)\, |\, x\in X\}.$$ If the point $x\in X$ is taken to be the parameter for the fibre, the left action of $G$ is trivial and the right action is the rack action; both are linear.

\medskip

An example of a linear Lie graph was given in Section~\ref{pathgraphs}:  the arrows of this graph are directed segments of geodesics on a Lie group, parametrised by length. The corresponding augmented rack is the exponential map of the Lie algebra to the Lie group.

\medskip

In a linear augmented Lie rack, the map $\pi: X\to G$ induces a $G$-equivariant\footnote{Here, in order to be consistent with the choice of the definition for the Lie algebra in $\LM$ one has to consider the \emph{right} action of $G$; accordingly, the adjoint representation should be the \emph{right} adjoint representation.} map of the tangent spaces
$$\pi_*: X = T_e X \to T_1 G = \g.$$
Considering the infinitesimal part of the $G$-action, we see that $\pi_*$ is a map of $\g$-modules and, hence defines a Lie algebra in $\LM$. 
%It is observed in \cite{M} that such an object, namely, a representation $G\to GL(X)$ together with a morphism to  the adjoint action of $G$ on $\mathfrak{g}$, can be considered as the result of a formal integration of the Lie algebra $(X,\mathfrak{g})$ in $\LM$; indeed, in this setup $\g$ acts on $X$ and this action\footnote{The action of $G$ on $X$ and on $\g$ considered in \cite{M} is the \emph{right} action, or in other words, the action of the opposite group $G^{\mathrm{op}}$; this is entirely a matter of choice.} maps to the Lie bracket in $\g$. In particular, $X$ can be considered as a Leibniz algebra.
Conversely, a Lie algebra in $\LM$, that is, a homomorphism $\g\to \mathfrak{gl}(X)$ which covers the adjoint representation of $\g$ by means of a map $$f:X\to\mathfrak{g},$$
for finite-dimensional $\g$ can be integrated so as to produce a morphism $F$ from a $G$-action on $X$ to the adjoint representation of $G$. Then,  the composition
$$X \xrightarrow{F} \mathfrak{g}\xrightarrow{\exp} G$$
defines a linear augmented Lie  rack. In terms of racks without augmentation, this exact construction can be found in \cite{K}.

%$$\ldots$$

% define $A = M\times G$, set $s: A\to G$ to be the projection onto the second factor and $t: A \to G$ to be the map 
%$$(v,g)\mapsto g \exp{f(v)}.$$ 
%The fibre over $h\in G$ is the set of all $(v, h \exp{f(v)}^{-1})$ with $v\in M$ and, hence, can be identified with $M$. Under this identification, the  left and the right actions of $G$ on the fibres of $t$ are the same as the actions on the fibres of $s$; in particular, they are linear.
\medskip

Therefore, a Lie algebra in $\LM$  produces a linear Lie graph. This graph can be thought of as the global integration of the Lie algebra; in this picture, the formal integration (as described in  \cite{M}) may be thought of as standing halfway between a Lie algebra in $\LM$ and the corresponding linear Lie graph. 

An augmented rack similar to a linear augmented Lie rack arises from the completion of the graded coinvariant module of an augmented rack. The map of $\D(G)$-modules $\pi_*:\P(X)\to \D(G)$ described in Section~\ref{coinvariant} is a degree 1 map of graded vector spaces and can be extended to the map between the graded completions $\overline{\P(X) }$ and   $\overline{\D(G)}$. The image of $\overline{\P(X)}$ lies in the subspace of primitive elements of $\overline{\D(G)}$ and, therefore, the image of the composition
$$\overline{\P(X)} \xrightarrow{\overline{\pi_*}} \Prim{\overline{\D(G)}}\xrightarrow{\exp} \overline{\D(G)}$$
lies in the group $\mathcal{G}_0(\overline{\D(G)}) $ of the group-like elements of the complete Hopf algebra $\overline{\D(G)}$. In particular, if $\f=\mathbb{R}$, the rack
$\overline{\P(X)}\to \mathcal{G}_0(\overline{\D(G)})$ is a linear augmented Lie rack.

\begin{rem} There is a situation where a Lie algebra in $\LM$ can be integrated to a ``more non-linear'' graph than a linear Lie graph. As mentioned in Section~\ref{LPC}, differential crossed modules (or, which is the same, crossed modules of Lie algebras) are Lie algebras in $\LM$. A differential crossed module can be integrated to a \emph{crossed module of Lie groups} (see \cite{BL}), which is an augmented rack that, in turn, gives rise to a group-like graph. The corresponding linear Lie graph can be recovered by taking the tangent spaces to the fibres of the source map of this graph.
\end{rem}

\end{document}